\newtheorem{theorem}{Theorem}[section]
\newtheorem{lemma}[theorem]{Lemma}
\newtheorem{proposition}[theorem]{Proposition}
\newtheorem{corollary}[theorem]{Corollary}
\numberwithin{equation}{section}
\theoremstyle{definition}
\newtheorem{remark}[theorem]{Remark}
\newcommand{\cp}{\mathbb{C}P}
\newcommand{\cc}{\mathbb{C}}
\newcommand{\zz}{\mathbb{Z}}
\newcommand{\qq}{\mathbb{Q}}
\newcommand{\sign}{{\rm sign}}
\newcommand{\td}{{\rm td}}
\newcommand{\Td}{{\rm Td}}
\renewcommand\arraystretch{1.2}
\begin{document} 

\title{Elliptic genera of level $N$ for complete intersections}{\thanks{This project was supported by the Natural Science Foundation of Tianjin City of China, No. 19JCYBJC30300.}}

\author{Jianbo Wang, Yuyu Wang, Zhiwang Yu}
\address{(Jianbo Wang) School of Mathematics, Tianjin University, Tianjin 300350, China}
\email{wjianbo@tju.edu.cn}
\address{(Yuyu Wang) College of Mathematical Science, Tianjin Normal University, Tianjin 300387, China}
\email{wdoubleyu@aliyun.com}
\address{(Zhiwang Yu) School of Mathematics, Tianjin University, Tianjin 300350, China}
\email{yzhwang@tju.edu.cn}

\begin{abstract}
We study the elliptic genera of level $N$ at the cusps of $\Gamma_1(N)$ for any complete intersection. These genera are described as the summations of generalized binomial coefficients, where each generalized binomial coefficient is related to the dimension and multi-degree of complete intersection. For complete intersection $X_n(\underline{d})$, write $c_1(X_n(\underline{d}))=c_1x$, where $x\in H^2(X_n(\underline{d});\zz)\cong\zz$ is a generator. We mainly discuss the values of the elliptic genera of level $N$ for $X_n(\underline{d})$ in the case of $c_1>0, =0$ or $<0$. In particular, the values about the Todd genus, $\hat{A}$-genus and $A_k$-genus of $X_n(\underline{d})$ can be derived from the elliptic genera of level $N$.
\end{abstract}

\keywords{Complete intersection, Elliptic genera of level $N$, Todd genus, $A_k$-genus.}
\date{\today}
\maketitle

\section{Introduction}

It is well-known that the $\hat{A}$-genus is an obstruction to a non-trivial circle action on the given manifold. For example, Atiyah and Hirzebruch \cite{AtHi70} showed that if $M$ is a connected $2n$-dimensional spin manifold and $S^1$ acts non-trivially and smoothly on $M$, then the $\hat{A}$-genus of $M$ vanishes. There are also some other genera behaving just like the $\hat{A}$-genus as obstructions to the existence of 
circle actions on manifolds.

If a complex manifold with the first Chern class $c_1\equiv 0\pmod N$ admits a non-trivial holomorphic circle action, then the  elliptic genera of level $N$ at the cusps vanish (\cite{Hi1988}). Krichever showed that the $A_k$-genus ($k\geqslant 2$) vanishes if a unitary manifold admits a non-trivial $S^1$-action preserving its stably almost complex structure under the condition $c_1\equiv 0\pmod k$(\cite[Theorem 2.2]{Kricever1976}).

The Todd genus is considered as an obstruction to the existence of symplectic and Hamiltonian symplectic circle actions on the given manifold. Fel'dman showed that the Todd genus of a manifold admitting a symplectic circle action with isolated fixed points is equal either to 0, in which case the action is non-Hamiltonian, or to 1, in which case the action is Hamiltonian (\cite[Theorem 1.1]{Fel'dman2001}). Herrera showed that the Todd genus vanishes if there exists a holomorphic circle action on $\pi_2$-finite compact complex manifolds (\cite[Corollary 3.1]{Herrera2007}). 

For complete intersections, the $\hat{A}$-genus, Todd genus, $A_k$-genus and elliptic genera of level $N$ at the cusps of a congruence subgroup $\Gamma\subset SL_2(\zz)$ of finite index can be calculated. It is a natural question to detect the condition under which these genera vanish.

For any complete intersection $X_n(\underline{d}):=X_n(d_1,\dots,d_r)$ with $d_1,\dots,d_r>1$, let $c_1=n+r+1-\sum\limits_{i=1}^rd_i$, the main theorems of this paper are as follows:
\begin{theorem}\label{mainTgci}
The Todd genus of $X_n(\underline{d})$ satisfies the following properties:

{\rm (1)} ~$
\Td(X_n(\underline{d}))=
\begin{cases}
1, & \text{if~} c_1>0;\\
1+(-1)^n, & \text{if~} c_1=0.
\end{cases}$

{\rm (2)} If $c_1<0$, then $
(-1)^n \Td(X_n(\underline{d}))\geqslant n+r.$
\end{theorem}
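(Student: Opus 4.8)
The plan is to compute the Todd genus of a complete intersection via the standard generating-function formula and then analyze its behavior according to the sign of $c_1$. Recall that for $X_n(\underline{d})$ the total Chern class and the normal bundle data give, by a classical computation (going back to Hirzebruch), that the Todd genus is the coefficient extracted from
$$\Td(X_n(\underline{d})) = \left[\left(\prod_{i=1}^r \frac{d_i z}{1-e^{-d_i z}}\right)\left(\frac{z}{1-e^{-z}}\right)^{n+r+1}\cdot\frac{1}{\prod_{i=1}^r d_i z}\right]_{z^n},$$
i.e. the coefficient of $z^n$ in $\left(\prod_{i=1}^r\frac{1-e^{-d_iz}}{d_i z}\right)^{-1}\left(\frac{z}{1-e^{-z}}\right)^{n+r+1}$, with the normalization $\int_{X_n(\underline{d})}x^n = d_1\cdots d_r$. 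Actually, writing it cleanly, $\Td(X_n(\underline{d}))$ is the coefficient of $z^n$ in $\left(\frac{z}{1-e^{-z}}\right)^{n+1}\prod_{i=1}^r\frac{1-e^{-d_iz}}{1-e^{-z}}\cdot\frac{1}{?}$ — I would first lock down the exact form of this generating function (it should already appear, or be derivable, from the level-$N$ elliptic genus formula stated as the paper's main tool, by specializing the genus to the Todd case).

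**Proving parts (1).**

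For $c_1 > 0$: here $\sum d_i < n+r+1$, so the power of $\frac{z}{1-e^{-z}}$ dominates and the function $\left(\frac{1-e^{-z}}{z}\right)^{c_1-1}\prod_{i=1}^r\frac{1-e^{-d_iz}}{d_i z}$ (or whatever the precise normalized integrand is) has a removable singularity at $0$ and in fact is holomorphic, with its $z^n$-coefficient computable by a residue/contour argument. The key observation is that $\frac{1-e^{-z}}{z}$ is an entire function equal to $1$ at $z=0$, so when the net exponent balance is favorable the whole expression, evaluated correctly, collapses to $1$ — I would make this rigorous by a substitution $w = 1-e^{-z}$ turning the coefficient extraction into a residue at $w=0$ of a rational-type expression, exactly as in Hirzebruch's computation of $\Td(\cp^n)=1$. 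For $c_1 = 0$: now $\sum d_i = n+r+1$ and the integrand acquires an extra factor making it symmetric under $z \mapsto -z$ up to the relevant sign, producing the $1 + (-1)^n$; I would track the parity carefully through the same $w$-substitution, where $c_1=0$ means one fewer power of $\frac{z}{1-e^{-z}}$ and the residue picks up a contribution from $w=1$ (equivalently $z\to\infty$ along the appropriate path) or a symmetric pairing of terms.

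**Proving part (2).**

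For $c_1 < 0$: now $\frac{z}{1-e^{-z}}$ appears to a power $n+r+1 = \sum d_i + c_1$ which is smaller, so the integrand is a genuine meromorphic function with a pole at $z = 0$ of controlled order, and the $z^n$-coefficient is a sum of residues. The strategy is to write $(-1)^n\Td(X_n(\underline{d}))$ as a residue and expand; each $\frac{d_i z}{1-e^{-d_i z}}$ and $\frac{z}{1-e^{-z}}$ has positive coefficients in a suitable expansion (Bernoulli-type positivity), so one gets a sum of nonnegative terms, and one isolates $n+r$ of them each contributing at least $1$. Concretely I expect the bound $(-1)^n\Td \geq n+r$ to come from identifying $\Td$ with (up to sign) the number of lattice points / a positive combinatorial sum — perhaps $\Td(X_n(\underline{d}))$ relates to a "generalized binomial coefficient" $\binom{-c_1 + \text{stuff}}{n}$-type quantity as the abstract advertises — and then a crude but honest lower bound on that binomial/sum by $n+r$.

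**Main obstacle.**

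The hard part will be part (2): extracting a \emph{clean} lower bound $n+r$ rather than just "positive and large." Getting positivity of $(-1)^n\Td$ is routine from the sign of $c_1$ and Bernoulli positivity, but pinning the constant $n+r$ requires either an exact combinatorial identity for $\Td(X_n(\underline{d}))$ in terms of $n$, $r$, $\underline{d}$, $c_1$ — which I would hope to read off from the level-$N$ elliptic genus formula (the "summations of generalized binomial coefficients" mentioned in the abstract) — or a careful term-by-term estimate that accounts separately for the $r$ factors $\frac{d_i z}{1-e^{-d_i z}}$ and the dimension $n$. I would budget most of the effort there, and I expect the proof to proceed by first establishing the exact formula and then showing its smallest possible value over all admissible $(\underline{d})$ with $c_1 < 0$ and fixed $n,r$ is exactly $n+r$, attained in a boundary case (likely $d_i$ all close to minimal, e.g.\ forcing $c_1 = -1$).
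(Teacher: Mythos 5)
Your proposal is a plan rather than a proof, and the essential content of the theorem is exactly the part you defer. For part (1), the route you sketch (coefficient extraction plus a substitution such as $w=1-e^{-z}$) is indeed how one arrives at the closed formula, but you never actually derive it; the paper simply quotes Hirzebruch's generating function $\sum_{n\geqslant 0}\Td(X_n(\underline{d}))\,z^{n+r}=\frac{1}{1-z}\prod_{i=1}^r\bigl(1-(1-z)^{d_i}\bigr)$, reads off $\Td(X_n(\underline{d}))=\sum_{j=0}^r(-1)^{n+r+j}\sum_{k_1<\cdots<k_j}\binom{-1+d_{k_1}+\cdots+d_{k_j}}{n+r}$, and part (1) is then immediate: when $c_1>0$ every partial sum $-1+d_{k_1}+\cdots+d_{k_j}$ with $j\geqslant 1$ lies in $[0,n+r-1]$ so only the $j=0$ term $\binom{-1}{n+r}$ survives, and when $c_1=0$ the $j=r$ term $\binom{n+r}{n+r}$ also contributes, giving $1+(-1)^n$. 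Nothing in your write-up carries out even this much.

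The genuine gap is part (2). You explicitly say you would "budget most of the effort there," i.e.\ you do not have the argument, and the mechanism you do propose is flawed: neither $\frac{z}{1-e^{-z}}$ (whose Bernoulli coefficients alternate in sign) nor the relevant rational expression $(1+z)^{-1}\prod_i\bigl((1+z)^{d_i}-1\bigr)$ has nonnegative coefficients, so "Bernoulli-type positivity" does not even give $(-1)^n\Td\geqslant 0$ routinely, let alone the sharp constant $n+r$. The paper's proof of (2) is genuinely combinatorial and inductive: a direct estimate for $r=1$; the recursion $(-1)^n\Td(X_n(\underline{d}))=\sum_{l=0}^n\binom{d_r}{n-l+1}(-1)^l\Td(X_l(\underline{d}_{\hat r}))$ obtained by peeling one degree off the generating function; and an induction on $r$ split into two cases according to the sign of $c_1^{(n,\hat r)}=n+r-\sum_{i\neq r}d_i$ — if it is negative, the induction hypothesis plus $\binom{d_r}{1}\geqslant 2$ gives $2(n+r-1)\geqslant n+r$; if it is nonnegative, all intermediate binomial terms vanish and only the $j=0$ and $j=r$ terms survive, yielding $(-1)^n+\binom{-1+\sum d_i}{n+r}\geqslant n+r$. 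Your alternative suggestion (minimize the exact formula over all admissible $\underline{d}$ with $c_1<0$, expecting the minimum at $c_1=-1$) could in principle be made to work, but it is not carried out, and as stated it is only a conjecture about where the minimum sits; without the exact formula and a monotonicity or case analysis like the paper's, the bound $n+r$ is unproven.
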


Note that, by \cite[page 14]{Hi1978}, the Todd genus $\Td(M)$ for $2n$-dimensional manifold $M$ with $n$ odd is divisible by $c_1$. Hence, if $c_1=0$, then $\Td(M)$ vanishes.

For a compact complex manifold $M$, let $E$ be a holomorphic vector bundle over $M$, $K$ the canonical bundle over $M$, and let $\chi(M,E)$ be the Euler-Poincar\'e characteristic of $M$ (\cite[\S 15]{Hi1978}), then $\chi(M,K^{\frac{k}{N}})$  are the values of the elliptic genera of level $N$ at some cusps
of $\Gamma_1(N)$ for $0<k<N$ (\cite[\S 7.2]{HiBeJu1992}).
\begin{theorem}\label{Intro-mainNgci}
If $c_1\equiv 0\pmod N$, then for any integer $k$ with $0\leqslant k\leqslant N$, $\chi\left(X_n(\underline{d}),K^\frac{k}{N}\right)$ satisfy the following properties:

{\rm (1)} If $c_1>0$, then $
\chi\left(X_n(\underline{d}),K^\frac{k}{N}\right)=
\begin{cases}
1, & \text{if~} k=0;\\
0, & \text{if~} 0<k<N;\\
(-1)^n, & \text{if~} k=N.
\end{cases}$

{\rm (2)} If $c_1=0$, then $\chi\left(X_n(\underline{d}),K^\frac{k}{N}\right)=1+(-1)^n$. 

{\rm (3)} If $c_1<0$, then 
\begin{align*}
(-1)^n\chi\left(X_n(\underline{d}),K^\frac{k}{N}\right) & \geqslant\dbinom{n+1-\frac{N-k}{N}c_1}{n+1}+(-1)^n\dbinom{n+1-\frac{k}{N}c_1}{n+1}, & \text{if~} 0\leqslant \frac{k}{N}\leqslant \frac{1}{2};\\
\chi\left(X_n(\underline{d}),K^\frac{k}{N}\right) & 
=0, \text{~if $\frac{k}{N}=\frac{1}{2}$ and $n$ is odd}; \\
\chi\left(X_n(\underline{d}),K^\frac{k}{N}\right) & \geqslant\dbinom{n+1-\frac{k}{N}c_1}{n+1}+(-1)^n\dbinom{n+1-\frac{N-k}{N}c_1}{n+1}, & \text{if~} \frac{1}{2}\leqslant\frac{k}{N}\leqslant1. 
\end{align*}
\end{theorem}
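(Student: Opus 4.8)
The plan is to express $\chi\bigl(X_n(\underline d),K^{k/N}\bigr)$ — which, because $N\mid c_1$, is the holomorphic Euler characteristic $\chi\bigl(X,\mathcal O_X(-\tfrac kN c_1)\bigr)$ of an honest line bundle on a smooth representative $X=X_n(\underline d)\subset\cp^{n+r}$ — in terms of the Hilbert function of the homogeneous coordinate ring $R_X=\cc[z_0,\dots,z_{n+r}]/(f_1,\dots,f_r)$, and then to reduce everything to one elementary combinatorial estimate. By adjunction $K_X=\mathcal O_X(-c_1)$, and the Koszul resolution of $\mathcal O_X$ in $\cp^{n+r}$ together with Hirzebruch--Riemann--Roch gives, for every $\ell\in\zz$,
\[
\chi\bigl(X,\mathcal O_X(\ell)\bigr)=\sum_{S\subseteq\{1,\dots,r\}}(-1)^{|S|}\binom{n+r+\ell-D_S}{n+r},\qquad D_S:=\sum_{i\in S}d_i .
\]
Collecting the summands with $D_S\le\ell$ (whose sum is $\dim(R_X)_\ell$, the coefficient of $t^\ell$ in $\prod_i(1-t^{d_i})/(1-t)^{n+r+1}$) and applying the involution $S\mapsto\{1,\dots,r\}\setminus S$ to the rest — using $D_S+D_{\{1,\dots,r\}\setminus S}=\sum_id_i$ and $\sum_id_i-(n+r+1)=-c_1$ — rewrites the remaining terms as $(-1)^n\dim(R_X)_{-c_1-\ell}$. (For $n\ge1$ this identity is just Serre duality together with $H^i(X,\mathcal O_X(\ell))=0$ for $0<i<n$; the case $n=0$, where $X$ is a finite point set, is a direct check.) Hence, with the convention $\dim(R_X)_m=0$ for $m<0$,
\[
\chi\bigl(X,K^{k/N}\bigr)=\dim(R_X)_{\ell}+(-1)^n\dim(R_X)_{-c_1-\ell},\qquad \ell:=-\tfrac kN c_1 .
\]

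Parts (1) and (2) are then immediate. If $c_1>0$, both $\ell=-\tfrac kN c_1$ and $-c_1-\ell=-\tfrac{N-k}{N}c_1$ are $\le0$; since $\dim(R_X)_m=0$ for $m<0$ and $\dim(R_X)_0=1$, the first term equals $1$ exactly when $k=0$ and the second exactly when $k=N$, yielding the three stated values. If $c_1=0$, then $\ell=-c_1-\ell=0$ and $\chi(X,K^{k/N})=\dim(R_X)_0+(-1)^n\dim(R_X)_0=1+(-1)^n$.

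For Part (3) put $c:=-c_1>0$, $a:=\tfrac{N-k}{N}c$ and $b:=\tfrac kN c$, so $a,b\in\zz_{\ge0}$, $a+b=c$, and $\chi(X,K^{k/N})=\dim(R_X)_b+(-1)^n\dim(R_X)_a$. Write $P_X(t)=\prod_{i=1}^r(1+t+\cdots+t^{d_i-1})=\sum_jp_jt^j$, so $\dim(R_X)_m=[t^m]\,P_X(t)/(1-t)^{n+1}$. The two inputs that make everything work are: (i) $p_j\ge1$ for $0\le j\le\deg P_X=\sum_i(d_i-1)$ — if $A,B$ have all coefficients $\ge1$ up to their degrees then so does $AB$, and each factor $1+t+\cdots+t^{d_i-1}$ is of this form; and (ii) the exponents stay in range, since $a,b\le c=\sum_id_i-(n+r+1)=\deg P_X-(n+1)<\deg P_X$. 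Multiplying by $1-t$ gives, for $1\le m\le\deg P_X$ (take $n\ge1$; $n=0$ is handled above),
\[
\dim(R_X)_m-\dim(R_X)_{m-1}=[t^m]\frac{P_X(t)}{(1-t)^n}=\sum_{j=0}^{m}p_j\binom{n-1+m-j}{n-1}\ \ge\ \sum_{j=0}^{m}\binom{n-1+m-j}{n-1}=\binom{n+m}{n},
\]
and, summing over an interval, for $0\le a\le b\le\deg P_X$,
\[
\dim(R_X)_b-\dim(R_X)_a\ \ge\ \sum_{m=a+1}^{b}\binom{n+m}{n}=\binom{n+1+b}{n+1}-\binom{n+1+a}{n+1},
\]
and in particular (case $a=0$) $\dim(R_X)_m\ge\binom{n+1+m}{n+1}$. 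Assume first $\tfrac kN\le\tfrac12$, so $b\le a$; using $a=-\tfrac{N-k}{N}c_1$ and $b=-\tfrac kN c_1$,
\[
(-1)^n\chi\bigl(X,K^{k/N}\bigr)=\dim(R_X)_a+(-1)^n\dim(R_X)_b\ \ge\ \binom{n+1-\tfrac{N-k}{N}c_1}{n+1}+(-1)^n\binom{n+1-\tfrac kN c_1}{n+1},
\]
where for $n$ even one applies $\dim(R_X)_a\ge\binom{n+1+a}{n+1}$ and $\dim(R_X)_b\ge\binom{n+1+b}{n+1}$ separately, and for $n$ odd one applies the interval inequality $\dim(R_X)_a-\dim(R_X)_b\ge\binom{n+1+a}{n+1}-\binom{n+1+b}{n+1}$ (valid since $a\ge b$). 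The range $\tfrac kN\ge\tfrac12$ is the same argument with $a$ and $b$ interchanged, and $\tfrac kN=\tfrac12$ with $n$ odd gives $\chi(X,K^{k/N})=\dim(R_X)_{c/2}-\dim(R_X)_{c/2}=0$.

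The proof is essentially bookkeeping; the one step that genuinely needs care is the range bound $a,b\le\deg P_X$ in (ii) — this is precisely what forces the restriction $0\le k\le N$ and dictates that the generalized binomial coefficients in the statement have upper entry $n+1$ rather than $n$. The imported facts (adjunction, the Hilbert series of a complete intersection, and the $S\mapsto\{1,\dots,r\}\setminus S$ symmetry, i.e. Serre/Gorenstein reciprocity) are all standard, so I do not expect any serious obstacle.
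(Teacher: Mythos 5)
Your proposal is correct, but it takes a genuinely different route from the paper. The paper first derives the closed binomial-sum formula \eqref{EgNformula} from Hirzebruch's generating function; parts (1) and (2) are read off from it, the half $\tfrac12\leqslant\tfrac kN\leqslant 1$ of part (3) is proved by induction on the number $r$ of degrees using the virtual-genus functional equation \eqref{AkR(u)} to shift degrees $(d_1,d_2)\mapsto(d_1+1,d_2-1)$ (with the binomial inequality \eqref{4dbinom} and explicit base cases $r=1,2$), and the half $0\leqslant\tfrac kN\leqslant\tfrac12$ is then obtained from the duality of \Cref{prop-twopEgLN}. You instead rewrite the same Koszul/binomial sum in the symmetric form $\chi(X,K^{k/N})=\dim(R_X)_{b}+(-1)^n\dim(R_X)_{a}$ with $a=-\tfrac{N-k}{N}c_1$, $b=-\tfrac kN c_1$ (either by the complement involution $S\mapsto\{1,\dots,r\}\setminus S$, which I checked is a correct purely formal manipulation once one notes that the terms with $\ell<D_S\leqslant n+r+\ell$ vanish, or geometrically by Serre duality plus vanishing of intermediate cohomology), and then everything reduces to the elementary facts that $P_X(t)=\prod_i(1+\dots+t^{d_i-1})$ has all coefficients $\geqslant1$ up to its degree and that $a,b\leqslant\deg P_X-(n+1)$. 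This buys you a uniform treatment: no induction on $r$, both halves of (3) and the $\tfrac kN=\tfrac12$ vanishing handled at once, the duality of \Cref{prop-twopEgLN} recovered for free from the symmetry of your identity, and a transparent interpretation of the two binomial terms as lower bounds for $h^0$ and $h^n$; the paper's method, by contrast, stays entirely inside the generating-function/induction framework it reuses for the Todd-genus results. Two small points to tidy: the parenthetical ``$n=0$ is handled above'' is not quite accurate for part (3) (though for $n=0$ the estimate is immediate from $\dim(R_X)_m-\dim(R_X)_{m-1}=p_m\geqslant1$), and when invoking the involution you should state explicitly that the intermediate terms with $0\leqslant n+r+\ell-D_S<n+r$ contribute zero.
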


Note that, for the Euler-Poincar\'e characteristic $\chi(M,K^{\frac{k}{N}})$ of a compact complex $n$-dimensional manifold $M$, 
\begin{enumerate}
\item if $k=0$, $\chi(M)$ is the Todd genus of $M$; 
\item if $\frac{k}{N}=\frac{1}{2}$, $\chi(M,K^{\frac{1}{2}})$ is the $\hat{A}$-genus of $M$;
\item $\chi(M,K^{1-\frac{1}{k}})$ corresponds to the $A_k$-genus up to a factor $k^n$. 
\end{enumerate}
In particular, we have 
\begin{theorem}\label{mainAkgci}
If $k\geqslant2$ and $\dfrac{c_1}{k}$ is integral, the $A_k$-genus of $X_n(\underline{d})$ satisfies the following properties:

{\rm (1)} ~$
A_k(X_n(\underline{d}))=
\begin{cases}
0, & \text{if~} c_1>0;\\
k^n\left(1+(-1)^n\right), & \text{if~} c_1=0.
\end{cases}$

{\rm (2)} If $c_1<0$, then 
$A_k(X_n(\underline{d}))\geqslant k^n\left[\dbinom{n+1-\frac{k-1}{k}c_1}{n+1}+(-1)^n\dbinom{n+1-\frac{1}{k}c_1}{n+1}\right].$
\end{theorem}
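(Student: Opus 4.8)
\emph{Proof proposal for Theorem~\ref{mainAkgci}.}
The plan is to obtain this statement as a direct specialization of Theorem~\ref{Intro-mainNgci}, using the identification recorded just before the statement: for a compact complex $n$-fold $M$ one has $A_k(M)=k^{n}\,\chi\!\left(M,K^{1-\frac1k}\right)$, and $1-\frac1k=\frac{k-1}{k}$. Concretely, I would invoke Theorem~\ref{Intro-mainNgci} with its modulus $N$ taken to be our $k$ and with the integer parameter appearing there (also written $k$ in that statement, so I relabel it $j$ to avoid the clash) set equal to $j=k-1$. The standing hypothesis ``$c_1\equiv 0\pmod N$'' then becomes exactly the assumption that $c_1/k$ is integral; and since $k\geqslant 2$ we have $0\leqslant k-1\leqslant k$ with $0<k-1<k$, so $j=k-1$ is an admissible value of the parameter, lying strictly between $0$ and $N$.

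The remaining step is to place $\frac{j}{N}=\frac{k-1}{k}$ correctly among the cases of Theorem~\ref{Intro-mainNgci}: because $k\geqslant 2$, $\frac{k-1}{k}=1-\frac1k\in[\frac12,1]$, so it is governed by the last displayed inequality of part~(3), and $\frac{N-j}{N}=\frac1k$. For part~(1): if $c_1>0$ the middle branch of Theorem~\ref{Intro-mainNgci}(1)---applicable since $0<k-1<k$---gives $\chi\!\left(X_n(\underline{d}),K^{\frac{k-1}{k}}\right)=0$, hence $A_k(X_n(\underline{d}))=0$; if $c_1=0$, Theorem~\ref{Intro-mainNgci}(2) gives $\chi\!\left(X_n(\underline{d}),K^{\frac{k-1}{k}}\right)=1+(-1)^n$, hence $A_k(X_n(\underline{d}))=k^{n}\bigl(1+(-1)^n\bigr)$. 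For part~(2): if $c_1<0$, the last line of Theorem~\ref{Intro-mainNgci}(3) yields
\[
\chi\!\left(X_n(\underline{d}),K^{\frac{k-1}{k}}\right)\;\geqslant\;\dbinom{n+1-\frac{k-1}{k}c_1}{n+1}+(-1)^n\dbinom{n+1-\frac{1}{k}c_1}{n+1},
\]
and multiplying through by the positive integer $k^{n}$ gives the asserted bound.

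Since the whole argument is essentially a change of variables, I do not anticipate a genuine obstacle; the two points requiring care are (a) citing the correct normalization $A_k(M)=k^{n}\chi(M,K^{1-1/k})$---for this I would point to the discussion of level-$N$ elliptic genera and Krichever's genus in \cite{HiBeJu1992} (cf.\ \cite{Kricever1976})---and (b) checking that the substituted parameters really do fall in the claimed ranges: that $0<k-1<k$ is \emph{strict}, so part~(1) produces the value $0$ rather than $1$ or $(-1)^n$, and that $\frac{k-1}{k}\geqslant\frac12$, so the second rather than the first inequality of part~(3) applies. The extreme case $k=2$, where $\frac{k-1}{k}=\frac12$, is consistent with the ``$n$ odd'' clause of Theorem~\ref{Intro-mainNgci}(3), which forces the bound to be an equality there.
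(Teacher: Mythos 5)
Your proposal is correct and matches the paper's own argument: the paper likewise derives Theorem~\ref{mainAkgci} by writing $A_k(M)=k^n\chi\left(M,K^{\frac{k-1}{k}}\right)$ (via $Q(x)=\frac{kx\,e^x}{e^{kx}-1}=\frac{kx}{1-e^{-kx}}e^{-(k-1)x}$ and \cite[Appendix III]{HiBeJu1992}) and then specializing the level-$N$ result (Theorem~\ref{mainNgci}/\ref{Intro-mainNgci}) with $N=k$ and parameter $k-1$, exactly as you do. Your range checks ($0<k-1<k$, $\frac{k-1}{k}\geqslant\frac12$) are the same ones implicit in the paper, so no further comment is needed.
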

\begin{remark}
In fact, for $A_2$-genus, the corresponding power series 
\[
Q(x)=\dfrac{2x\cdot e^x}{e^{2x}-1}
\]
 is an even power series, then $A_2$-genus is expressible in Pontrjagin numbers and hence defined for an oriented smooth manifold (\cite[\S1.6]{HiBeJu1992}). Moreover, the characteristic power series corresponding to $\hat{A}$-genus is 
\[
 Q(x)=\dfrac{x/2}{\sinh(x/2)}, 
\]
 then for a compact oriented $2n$-dimensional smooth manifold $M$ with $p(TM)=(1+x_1^2)\cdots(1+x_n^2)$, we have
\begin{align*}
A_2(M)& =\prod_{i=1}^n\frac{2x_i\cdot e^{x_i}}{e^{2x_i}-1}[M]\\
&=2^n\cdot\prod_{i=1}^n\frac{x_i\cdot e^{\frac{x_i}{2}}}{e^{x_i}-1}[M]\\
&=2^n\cdot\prod_{i=1}^n\frac{\frac{x_i}{2}}{\sinh\left(\frac{x_i}{2}\right)}[M] \\
&=2^n\cdot\hat{A}(M).
\end{align*}
\end{remark}

This paper is organized as follows: In \Cref{sec-reivew}, we give a brief review of some classical Hirzebruch genera and smooth projective complete intersection. The Todd genus of a complete intersection and the proof of the main \Cref{mainTgci} are in \Cref{sec-Tg}; The elliptic genera of level $N$ and the Euler-Poincar\'e characteristic of a complete intersection are discussed in \Cref{sec-EgLN}. 
As a special case of the elliptic genera of level  $N$, the $A_k$-genus of a complete intersection and the \Cref{mainAkgci} are also discussed in \Cref{sec-EgLN}. In \Cref{sec-proof}, we give a proof of \Cref{mainNgci}, which is the main part of \Cref{Intro-mainNgci}. 

\section{Preliminaries}\label{sec-reivew}

\subsection{Hirzebruch genus}

Let $M$ be a compact $2n$-dimensional almost complex manifold. The multiplicative sequence with $Q(x)=1+b_1x+b_2x^2+\cdots$ as the characteristic power series is denoted by $\{K_j(c_1,\dots,c_j)\}$  (\cite[\S 1.8]{HiBeJu1992}), where $K_j(c_1,\dots,c_j)$ is a certain rational homogeneous polynomial of degree $2j$ in the Chern classes of $M$.
Define the characteristic class of $TM$ by
\[
K(TM)=\sum_{j=0}^\infty{K_j(c_1,\dots,c_j)}.
\]
The {\it $K$-genus of an almost complex manifold} $M$ is  the evaluation of $K(TM)$ on the fundamental class:
 \[
K(M)=K(TM)[M]=K_n(c_1,\dots,c_n)[M].
 \]
Moreover, if we consider a formal factorization of the total Chern class $c(TM)=(1+x_1)\cdots(1+x_n)$, then
 \[
K(M)=\prod_{i=1}^nQ(x_i)[M].
 \]

For a compact oriented smooth $2n$-manifold  $M$,
given an even power series $Q(x)=1+a_2x^2+a_4x^4+\cdots$, there is a multiplicative sequence $\left\{K_{j}(p_{1}, \ldots, p_{j})\right\}$ (\cite[\S 1.6]{HiBeJu1992}), where $K_j(p_{1}, \ldots, p_{j})$ is a rational homogeneous polynomial of degree $4j$  in the Pontrjagin classes.
The characteristic class $K(TM)$ is defined as 
\begin{equation*}
K(TM)=\sum_{j=0}^\infty{K_j(p_1,\dots,p_j)}.
\end{equation*}
Let $p(TM)=(1+x_1^2)\cdots(1+x_n^2)$ be the formal factorization of the total Pontrjagin class, then the {\it $K$-genus of an oriented smooth manifold} $M$ is the evaluation
\[
K(M)=K(TM)[M]=\prod_{i=1}^nQ(x_i)[M].
\]
Note that,  we put $K(M)=0$ if $2\nmid n$.

For example, we list some genera, the associated characteristic power series,  and multiplicative sequences in \Cref{CPS-Genus} (\cite{Hi1978, HiBeJu1992, Kricever1976}).

\begin{table}[h]\renewcommand\arraystretch{2.3}
\caption{characteristic power series and genus}
\begin{tabular}{l|llll}
  \toprule
$K$-genus  & $K(M)$ & $K$-class & \makecell[l]{characteristic\\ power series} & \makecell[l]{multiplicative\\ sequence}   \\
  \midrule
Todd genus & $\Td(M)$ & $\td(TM)$ & $\dfrac{x}{1-e^{-x}}$ & $\{T_j(c_1,\dots,c_j)\}$ \\  \hline
\makecell[l]{generalized\\ Todd genus} & $\Td_y(M)$ & $\td_y(TM)$ & $\dfrac{x(1+ye^{-x(1+y)})}{1-e^{-x(1+y)}}$ & $\{T_j(y;c_1,\dots,c_j)\}$  \\ \hline
$A_k$-genus & $A_k(M)$ & $A_k(TM)$ &  $\dfrac{kxe^x}{e^{kx}-1} ~(k\geqslant 2)$ & $\{(A_k)_j(c_1,\dots,c_j)\}$ \\ \hline
signature & $\sign(M)$ & $L(TM)$ &  $\dfrac{x}{\tanh x}$ (even) & $\{L_j(p_1,\dots,p_j)\}$ \\  \hline
$\hat{A}$-genus & $\hat{A}(M)$ &  $\hat{A}(TM)$ &  $\dfrac{\frac{x}{2}}{\sinh \left(\frac{x}{2}\right)}$ (even) & $\{\hat{A}_j(p_1,\dots,p_j)\}$ \\
  \bottomrule
 \end{tabular}
\label{CPS-Genus}
\end{table}

As a generalization of elliptic genus, the  elliptic genera of level $N$ were introduced independently by Hirzebruch \cite{Hi1988} and Witten \cite{Witten1987}. Furthermore, Witten conjectured its rigidity for a complex manifold with $c_1\equiv 0 \pmod N$ under circle actions, which was proved by Hirzebruch \cite{Hi1988} and Krichever \cite{Krichever1990}.

Let $\mathbb{H}=\{z\in\cc\mid {\rm Im}(z)>0\}$ be the upper half-plane of the complex numbers, and let $N$ be a natural number $>1$. For $\tau\in\mathbb{H}$ and $L=2\pi{\rm i}(\mathbb{Z}\tau+\mathbb{Z})$, there exists an elliptic function $h(x)$ with respect to $x$ for the lattice $L$ with divisor $N\cdot(0)-N\cdot(\alpha)$. Then $\alpha$ is a nonzero $N$-division point of $L$. More precisely, $\alpha=2\pi{\rm i}\dfrac{k\tau+l}{N}\neq0$ for $k,l\in\mathbb{Z}$.

Assume that $h(x)=x^N+~\text{higher terms}$. Let $f(x)=\sqrt[N]{h(x)}=x+~\text{higher terms}$. Then $f$ is elliptic with respect to a sublattice $L^\prime$ of index $N$ in $L$. For fixed $k$ and $l$, the characteristic power series $Q(x)=\dfrac{x}{f(x)}$ is uniquely determined.

For a compact almost complex manifold $M$ of real dimension $2n$, we consider a formal factorization of the total Chern class $c(M)=(1+x_1)\cdots(1+x_n), \ c_i\in H^{2i}(M;\mathbb{Z})$,
the genus $\varphi_{N,\alpha}$ associated to the power series $Q(x)$ is defined by
\begin{equation*}
\varphi_{N,\alpha}(M)=\left(\prod\limits_{i=1}^n\frac{x_i}{f(x_i)}\right)[M],
\end{equation*}
which is called the {\it elliptic genus of level $N$} for the almost complex manifold $M$. The elliptic genus of level $N$ for a compact almost complex manifold of dimension $2n$ is a modular form of weight $n$ on the congruence subgroup
\begin{equation*}
\Gamma=\left\{\begin{pmatrix} a & b \\ c & d \end{pmatrix}\in SL_2(\mathbb{Z}) \Bigg| (k~~~~l)\equiv (k~~~~l) \begin{pmatrix} a & b \\ c & d \end{pmatrix}\pmod  N\right\}.
\end{equation*}
In other words, as in \cite[page 145]{HiBeJu1992}, define the theta function by
\begin{equation*}
\Phi(\tau,x)=\left(e^\frac{x}{2}-e^{-\frac{x}{2}}\right)\prod\limits_{m=1}^\infty\frac{(1-q^me^x)(1-q^me^{-x})}{(1-q^m)^2},
\end{equation*}
where $q=e^{2\pi{\rm i}\tau}$. Then the characteristic power series corresponding to the elliptic genus of level $N$ is given by
\begin{equation*}
Q(\tau,x)=xe^{-\frac{k}{N}x}\cdot\frac{\Phi(\tau,x-\alpha)}{\Phi(\tau,x)\Phi(\tau,-\alpha)}.
\end{equation*}
In particular, if $k=0$ and $(l,N)=1$, i.e. $l$ and $N$ are coprime, then it is a modular form of weight $d$ on the congruence subgroup 
\[
\Gamma_1(N):=\left\{\begin{pmatrix} a & b \\ c & d \end{pmatrix}\in SL_2(\mathbb{Z}) \Bigg| \begin{pmatrix} a & b \\ c & d \end{pmatrix}\equiv\begin{pmatrix} 1 & b \\ 0 & 1 \end{pmatrix}\pmod  N\right\}.
\]

Unless otherwise stated, the elliptic genera of level $N$ mentioned in the remainder of this paper are modular forms on $\Gamma_1(N)$. 

\subsection{Complete intersection}

A {\it complete intersection}
$X_n(d_1,\dots,d_r)\subset \cp^{n+r}$ is a compact complex $n$-dimensional manifold given by a transversal intersection of $r$ nonsingular hypersurfaces in
the complex projective space $\cp^{n+r}$. The unordered $r$-tuple $\underline{d}:=(d_1,\dots,d_r)$
is called the {\it multi-degree}, which denotes the degrees of the $r$ nonsingular hypersurfaces.
It is well-known that the diffeomorphism type of the real $2n$-dimensional underlying manifold of $X_n(d_1,\dots,d_r)$ depends only on the multi-degree and dimension. We always identify  $X_n(d_1,\dots,d_r,1)$ with $X_n(d_1,\dots,d_r)$, since the effect of omitting the degree $1$ is to lower the embedding $X_n(d_1,\dots,d_r,1)\subset \cp^{n+r+1}$ to $X_n(d_1,\dots,d_r)\subset \cp^{n+r}$. If the multi-degree $\underline{d}=(d_1,\dots,d_r)$ is clear from the context, we set $X_n(\underline{d}):=X_n(d_1,\dots,d_r)$.
By the Lefschetz hyperplane section theorem, the inclusion $X_n(\underline{d}) \subset \mathbb{C}P^{n+r}$ is $n$-connected.

Let $\gamma$ be the pullback of the hyperplane bundle over $\cp^{n+r}$, and $x=c_1(\gamma)\in H^2 (X_n(\underline{d});\zz)\cong \zz$ be the generator. The evaluation $x^n[X_n(\underline{d})]$ equals the product of degrees: $d_1d_2\cdots d_r$, which is called the {\it total degree} of $X_n(\underline{d})$.
The total Chern class and total Pontrjagin class of $X_{n}(\underline{d})$ are given by \cite[\S 7]{Libgober&Wood1982}:
\begin{align*}
c(X_{n}(\underline{d})) & =(1+x)^{n+r+1} \cdot \prod_{i=1}^{r}\left(1+d_{i} \cdot x\right)^{-1},\\
p(X_{n}(\underline{d})) & =\left(1+x^{2}\right)^{n+r+1} \cdot \prod_{i=1}^{r}\left(1+d_{i}^{2} \cdot x^{2}\right)^{-1}.
\end{align*}
Particularly, the first Chern class is
\begin{equation}\label{ChernPontrjagin}
c_1(X_n(\underline{d}))=\left(n+r+1-\sum_{i=1}^{r}d_i\right)x.
\end{equation}
The Todd class, $A_k$-class and $\hat{A}$-class of $TX_n(\underline{d})$ are listed as follows:
\begin{align*}
\td(TX_{n}(\underline{d})) & =\left(\frac{x}{1-e^{-x}}\right)^{n+r+1}\cdot\prod_{i=1}^{r}\left(\frac{d_{i}x}{1-e^{-d_{i}x}}\right)^{-1},\\
A_k(TX_{n}(\underline{d}))& =\left(\frac{kx\cdot e^x}{e^{kx}-1}\right)^{n+r+1}\cdot\prod_{i=1}^{r}\left(\frac{kd_{i}x\cdot e^{d_{i} x}}{e^{kd_{i}x}-1}\right)^{-1},\\
\hat{A}(TX_{n}(\underline{d}))& =\left(\frac{x\cdot e^{\frac{x}{2}}}{e^{x}-1}\right)^{n+r+1}\cdot\prod_{i=1}^{r}\left(\frac{d_{i}x\cdot e^{\frac{d_{i}x}{2}}}{e^{d_{i}x}-1}\right)^{-1}.
\end{align*}

For each complete intersection, the Euler characteristic, signature, $\hat{A}$-genus, Todd genus and $A_k$-genus can be described by the dimension and multi-degree. For more details, please see \cite{WYW2020}.

Ewing and Moolgavkar showed that the Euler characteristic of $X_n(\underline{d})$ satisfies\\ $(-1)^n \chi(X_{n}(\underline{d}))\geqslant0$ for total degree $>2$ (\cite{EWingMoolgavkar1976}). Moreover, only the Euler characteristic of $X_n(2,2)$ for odd $n$ and $X_1(3)$ vanish.
If the Euler characteristic of $X_{n}(\underline{d})$ is not equal to $0$, $1$ or $-1$, then  Chen gave the following formula about $\chi(X_{n}(\underline{d}))$ (\cite{Chen1978}):
\[
(-1)^{n} \chi(X_{n}(\underline{d}))=d_{1} d_{2} \cdots d_{r}\cdot h_{n}\left[d_{1}, d_{2}, \ldots, d_{r}\right].
\]
where $h_{n}\left[d_{1}, d_{2}, \ldots, d_{r}\right]>1 \text{~for~} r \geqslant 2$,
except $n=1, r=2,
d_{1} \leqslant 2, d_{2} \leqslant 3$,
or $n=1, r=3, d_{1}, d_{2}, d_{3} \leqslant 2$.

Let $X_{2m}(\underline{d})$ be an even dimensional complete intersection, Libgober proved that its signature is monotone as a function on the $d_{i}$'s (\cite{Libgober1980}): increasing when $m$ is even and decreasing when $m$ is odd. Moreover, $(-1)^m\sign (X_{2m}(\underline{d}))>5$ except for:
\begin{align*}
& \sign(\cp^{2m})=1;\\
& \sign(X_{2m}(2))=
\begin{cases}
0, & \text{ $m$ odd};\\
2, & \text{ $m$ even}.
\end{cases}\\
& \sign(X_2(2,2))=-4.
\end{align*}

Brooks showed that, for a spin complete intersection $X_{2m}(\underline{d})$, its $\hat{A}$-genus vanishes if and only if $c_1=2m+r+1-\sum\limits_{i=1}^rd_i> 0$  (\cite{Br1983}). In \cite[Theorem 3.1]{WYW2020},  an explicit formula of the $\hat{A}(X_{2m}(\underline{d}))$ is given, and a new proof of $\hat{A}$-genus vanishing under $c_1>0$ can be found in \cite[Theorem 4.8]{WYW2020}.

Just inspired by the above-mentioned properties of Euler characteristic, signature and $\hat{A}$-genus, this paper aims to detect the similar properties of Todd genus, elliptic genera of level $N$ and $A_k$-genera for complete intersections.

\section{Todd genus of complete intersections}\label{sec-Tg}

For the $\chi_y$-characteristic of a compact complex manifold, by Atiyah-Singer index theorem, when $y=0$, the $\chi_0$-characteristic is just the Todd genus. 
By \cite[Corollary, page 160]{Hi1978}, the $\chi_{y}$-characteristic of the line bundle $\gamma^{k}$ over $X_{n}(\underline{d})$ is given by
\begin{equation}\label{Hi-cor-chiy}
\sum_{n=0}^{\infty} \chi_{y}\left(X_{n}(\underline{d}), \gamma^{k}\right) z^{n+r}=\frac{(1+z y)^{k-1}}{(1-z)^{k+1}}\cdot \prod_{i=1}^{r} \frac{(1+z y)^{d_{i}}-(1-z)^{d_{i}}}{(1+z y)^{d_{i}}+y(1-z)^{d_{i}}},
\end{equation}
where $\underline{d}=(d_1,\dots,d_r)$. So 
\begin{equation}\label{Hi-cor-p160}
\sum\limits_{n=0}^\infty \Td(X_n(\underline{d}))z^{n+r}=\frac{1}{1-z}\cdot\prod_{i=1}^r \left(1-(1-z)^{d_i}\right).
\end{equation}
Then the Todd genus of complete intersection $X_n(\underline{d})$ is the  coefficient of  $z^{n+r}$ in
\[
(1-z)^{-1}\cdot\prod_{i=1}^r\left(1-(1-z)^{d_i}\right),
\]
or equivalently the  coefficient of $z^{n+r}$ in
\begin{equation}\label{Toddgenus=coeffixn+r}
(-1)^n\cdot(1+z)^{-1}\cdot\prod_{i=1}^r\left((1+z)^{d_i}-1\right).
\end{equation}
Then it is easy to get the following result:
\begin{proposition}\label{Toddgenusci}
The Todd genus of complete intersection $X_n{(\underline{d})}$ is
\begin{equation*}
\Td(X_n(\underline{d})=\sum_{j=0}^r{{(-1)^{n+r+j}}\sum_{1\leqslant{k_1}<\dots<{k_j}\leqslant r}\dbinom{-1 +d_{k_1}+\dots+d_{k_j}}{n+r}}.
\end{equation*}
where $\dbinom{a}{k}:=\displaystyle\frac{a(a-1)(a-2)\cdots(a-k+1)}{k!}$ denotes the generalized binomial coefficients and $d_{k_1} +\dots +d_{k_j}$ vanishes if $j =0$.
\end{proposition}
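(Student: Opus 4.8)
The plan is to extract the coefficient of $z^{n+r}$ in the expression \eqref{Toddgenus=coeffixn+r} by expanding the product $\prod_{i=1}^r\left((1+z)^{d_i}-1\right)$ directly. First I would expand this product as a sum over all subsets of $\{1,\dots,r\}$: choosing, for each index $i$, either the term $(1+z)^{d_i}$ or the term $-1$. If $S=\{k_1,\dots,k_j\}$ is the subset of indices for which we pick $(1+z)^{d_i}$, the corresponding summand is $(-1)^{r-j}(1+z)^{d_{k_1}+\dots+d_{k_j}}$, where the empty-subset term contributes $(-1)^r$ (with the exponent sum equal to $0$). Multiplying by $(1+z)^{-1}$ gives $(-1)^{r-j}(1+z)^{-1+d_{k_1}+\dots+d_{k_j}}$.

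Next I would use the binomial series $(1+z)^a=\sum_{m\ge 0}\binom{a}{m}z^m$, valid formally for any $a$ with the generalized binomial coefficients defined as in the statement, to read off the coefficient of $z^{n+r}$ in each summand as $\binom{-1+d_{k_1}+\dots+d_{k_j}}{n+r}$. Summing over all subsets, grouped by cardinality $j$, and carrying along the overall factor $(-1)^n$ from \eqref{Toddgenus=coeffixn+r}, yields
\[
\Td(X_n(\underline{d}))=(-1)^n\sum_{j=0}^r(-1)^{r-j}\sum_{1\le k_1<\dots<k_j\le r}\binom{-1+d_{k_1}+\dots+d_{k_j}}{n+r},
\]
and since $(-1)^n(-1)^{r-j}=(-1)^{n+r+j}$ (as $(-1)^{-j}=(-1)^j$), this is exactly the claimed formula.

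The only genuinely delicate point is the bookkeeping of signs and the treatment of the empty subset: one must be careful that $j=0$ contributes the term $(-1)^{n+r}\binom{-1}{n+r}$ with the convention that the degree sum is $0$, matching the stated convention, and that the formal power series manipulations are legitimate — here it suffices to note that $(1+z)^{-1}$ and each $(1+z)^{d_i}$ are ordinary power series in $z$ (the $d_i$ being positive integers, so $(1+z)^{d_i}-1$ has no constant term is not needed, but harmless), so their product is a well-defined power series whose coefficients can be computed term by term. I expect this to be routine; the main obstacle, such as it is, is simply ensuring the exponent of $-1$ is written in a form that visibly matches $(-1)^{n+r+j}$, which is immediate once one recalls parity of $-j$ equals parity of $j$.
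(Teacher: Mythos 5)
Your proposal is correct and is essentially the argument the paper intends: the paper states the proposition as an immediate consequence of \eqref{Toddgenus=coeffixn+r}, and your subset expansion of $\prod_{i=1}^r\left((1+z)^{d_i}-1\right)$ followed by reading off the coefficient of $z^{n+r}$ via the generalized binomial series, with the sign bookkeeping $(-1)^n(-1)^{r-j}=(-1)^{n+r+j}$, is exactly that routine extraction.
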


\Cref{mainTgci} (1) can be directly obtained from \Cref{Toddgenusci}.
To prove \Cref{mainTgci} (2), we firstly prove two lemmas for a preparation.

\begin{lemma}\label{Tghs}
For complex hypersurface $X_n(d_1)$ with $c_1=n+2-d_1<0$, its Todd genus satisfies the following inequality:
\[
(-1)^n\Td(X_n(d_1))\geqslant n+1.
\]
\end{lemma}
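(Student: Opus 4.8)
\textbf{Proof proposal for Lemma \ref{Tghs}.}

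The plan is to work directly from the generating-function description \eqref{Toddgenus=coeffixn+r} specialized to a single hypersurface, $r=1$ and $\underline{d}=(d_1)$. In that case the Todd genus $\Td(X_n(d_1))$ is the coefficient of $z^{n+1}$ in $(-1)^n(1+z)^{-1}\bigl((1+z)^{d_1}-1\bigr)$, so by \Cref{Toddgenusci} (or by reading off the coefficient directly) we get
\[
(-1)^n\Td(X_n(d_1))=\binom{d_1-1}{n+1}-\binom{-1}{n+1}=\binom{d_1-1}{n+1}+(-1)^n.
\]
Here I used $\binom{-1}{n+1}=(-1)^{n+1}$. Thus the claim $(-1)^n\Td(X_n(d_1))\geqslant n+1$ reduces to the purely combinatorial inequality
\[
\binom{d_1-1}{n+1}\geqslant n+1-(-1)^n,
\]
which I would prove under the standing hypothesis $c_1=n+2-d_1<0$, i.e. $d_1\geqslant n+3$, equivalently $d_1-1\geqslant n+2$.

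The next step is to estimate $\binom{d_1-1}{n+1}$ from below using $d_1-1\geqslant n+2$. Since $n+1\geqslant 1$ and the top argument is a genuine integer $\geqslant n+2$, the binomial coefficient is a product of $n+1$ consecutive positive integers divided by $(n+1)!$, and it is monotone increasing in $d_1$; hence
\[
\binom{d_1-1}{n+1}\geqslant\binom{n+2}{n+1}=n+2.
\]
Since $n+2\geqslant n+1-(-1)^n$ for every $n\geqslant 0$ (indeed $n+2\geqslant n$ when $n$ is even and $n+2\geqslant n+2$ when $n$ is odd), the desired inequality follows, with equality possible only in the boundary case $d_1=n+3$ with $n$ odd. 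This finishes the argument.

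There is essentially no hard part here: the only thing to be careful about is the bookkeeping of signs and the evaluation $\binom{-1}{n+1}=(-1)^{n+1}$ coming from the $(1+z)^{-1}$ factor, together with checking that the hypothesis $c_1<0$ is being used in the sharp form $d_1-1\geqslant n+2$ rather than merely $d_1-1\geqslant n+1$ — the latter would only give $\binom{d_1-1}{n+1}\geqslant 1$, which is too weak. I would also remark, for use in the proof of \Cref{mainTgci}(2), that the same computation shows $(-1)^n\Td(X_n(d_1))=\binom{d_1-1}{n+1}+(-1)^n$ exactly, not merely as an inequality, since this exact form (and its monotonicity in $d_1$) is what will feed the induction on $r$ in the next lemma.
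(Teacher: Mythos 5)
Your proposal is correct and follows essentially the same route as the paper: both extract the exact identity $(-1)^n\Td(X_n(d_1))=\dbinom{d_1-1}{n+1}+(-1)^n$ from \Cref{Toddgenusci}, use $c_1<0$ in the form $d_1-1\geqslant n+2$ to bound $\dbinom{d_1-1}{n+1}\geqslant n+2$, and conclude via $(-1)^n\geqslant-1$. Your additional remarks on the equality case and the exact formula are accurate but not needed for the lemma.
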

\begin{proof}
By \Cref{Toddgenusci},
\[
\Td(X_n(d_1))=(-1)^{n+1}\cdot\left[\dbinom{-1}{n+1}-\dbinom{-1+d_1}{n+1}\right].
\]
Thus,
\[
(-1)^n\cdot \Td(X_n(d_1))=-\dbinom{-1}{n+1}+\dbinom{-1+d_1}{n+1}.
\]
Since $c_1=n+2-d_1<0$,
\begin{align*}
(-1)^n\cdot \Td(X_n(d_1)) & \geqslant (-1)^n+\dbinom{-1+n+3}{n+1} \\
& = (-1)^n+n+2 \\
& \geqslant n+1. \qedhere
\end{align*}
\end{proof}

Denote the multi-degree $(d_1,\dots,d_{j-1},d_{j+1},\dots,d_r)$ by $\underline{d}_{\hat{j}}$. Hence, by \eqref{Hi-cor-p160}, 
\begin{align}\label{Tdznromit}
\sum\limits_{n=0}^\infty \Td(X_n(\underline{d}))z^{n}=& \left(\frac{1}{1-z}\cdot\prod_{i=1}^{r-1}\frac{1-(1-z)^{d_i}}{z}\right)\cdot\frac{1-(1-z)^{d_r}}{z} \nonumber\\
=& \left(\sum\limits_{l=0}^\infty \Td(X_l(\underline{d}_{\hat{r}}))z^{l}\right)\cdot\frac{1-(1-z)^{d_r}}{z}.
\end{align}
Compare the coefficients of $z^{n}$ in the left and right sides of \eqref{Tdznromit}, we have
\[
\Td(X_n(\underline{d}))=\sum_{l=0}^n(-1)^{n-l}\dbinom{d_r}{n-l+1}\cdot\Td(X_l(\underline{d}_{\hat{r}})).
\]
Thus,
\begin{equation}\label{gf}
(-1)^n \Td(X_n(\underline{d}))=\sum\limits_{l=0}^n\dbinom{d_r}{n-l+1}\cdot (-1)^l \Td(X_l(\underline{d}_{\hat{r}})).
\end{equation}

\begin{lemma}\label{Tgcitohs}
For any complete intersection $X_n(d_1,\dots,d_r)$ with $r>1$, $d_1,\dots,d_r>1$ and $c_1=n+r+1-\sum\limits_{i=1}^rd_i<0$, set $c_{1}^{(n,\hat{j})}:=n+(r-1)+1-\sum\limits_{i=1}^rd_i+d_j$. If $c_{1}^{(n,\hat{j})}\geqslant 0$ for any $1\leqslant j\leqslant r$, then
\[
(-1)^n\Td(X_n(d_1,\dots,d_r))\geqslant n+r.
\]
\end{lemma}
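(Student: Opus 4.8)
\emph{Proof proposal.} The plan is to start from the closed form \eqref{Toddgenus=coeffixn+r} and rewrite it in a basis in which all coefficients are manifestly nonnegative. The key algebraic observation is the telescoping identity $(1+z)^{d_i}-1 = z\sum_{j=0}^{d_i-1}(1+z)^j$, which turns the bracket in \eqref{Toddgenus=coeffixn+r} into $z^r\sum_{m=0}^{D-r}a_m(1+z)^{m-1}$, where $D:=\sum_{i=1}^r d_i$ and $a_m := \#\{(j_1,\dots,j_r)\colon 0\le j_i\le d_i-1,\ \sum_i j_i=m\}$; note $a_m\ge 0$, $a_0=1$, and $a_{D-r}=1$. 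Extracting the coefficient of $z^{n+r}$ then gives
\[
(-1)^n\Td\big(X_n(\underline d)\big)=\sum_{m=0}^{D-r}a_m\binom{m-1}{n}.
\]
First I would record the elementary facts that $\binom{m-1}{n}=(-1)^n$ for $m=0$, that $\binom{m-1}{n}=0$ for $1\le m\le n$ (the numerator $(m-1)(m-2)\cdots(m-n)$ has a vanishing factor), and that $\binom{m-1}{n}$ is an ordinary nonnegative binomial coefficient for $m\ge n+1$. Since $a_0=1$, this collapses the identity to
\[
(-1)^n\Td\big(X_n(\underline d)\big)=(-1)^n+S,\qquad S:=\sum_{m=n+1}^{D-r}a_m\binom{m-1}{n}\ \ge\ 0 .
\]

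Next I would feed in the hypotheses. Writing $c:=-c_1>0$, one has $D-r=n+1-c_1=n+1+c$, so the sum defining $S$ runs over the nonempty range $m\in\{n+1,\dots,n+1+c\}$. The assumption $c_1^{(n,\hat k)}\ge 0$ is precisely $d_k\ge c+1$, i.e. $d_k-1\ge c$; hence for every $t$ with $1\le t\le c$, decreasing exactly one coordinate of the all-maximal tuple $(d_1-1,\dots,d_r-1)$ by $t$ stays admissible, and the $r$ tuples obtained this way (one for each coordinate) are distinct and have coordinate sum $(n+1+c)-t$. Therefore $a_m\ge r$ for $n+1\le m\le n+c$, while $a_{n+1+c}=1$. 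This yields
\[
S\ \ge\ \binom{n+c}{n}+r\sum_{m=n+1}^{n+c}\binom{m-1}{n}\ =\ \binom{n+c}{n}+r\binom{n+c}{n+1}\ \ge\ (n+1)+r,
\]
where the middle step is the hockey-stick identity and the last uses $\binom{n+c}{n}\ge\binom{n+1}{n}=n+1$ (monotonicity in $c\ge 1$) together with $\binom{n+c}{n+1}\ge 1$. Plugging back, $(-1)^n\Td(X_n(\underline d))=(-1)^n+S\ge -1+(n+1)+r=n+r$, which is the claim.

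I expect the only real obstacle to be spotting the right reformulation — once the positive $(1+z)$-expansion is in hand, the remainder is a short combinatorial estimate, with the lower bound $a_m\ge r$ being exactly where the omission hypothesis $c_1^{(n,\hat k)}\ge 0$ enters. One could instead try to iterate the recursion \eqref{gf} in the last degree $d_r$, but this does not close cleanly: the lower-dimensional pieces $X_l(\underline d_{\hat r})$ with $l<n$ need not satisfy the omission hypothesis (their first Chern number can be negative while the relevant inequality for the truncated multi-degree fails), so an induction on $r$ or $n$ breaks down. The generating-function argument above sidesteps this by keeping $n$ and $\underline d$ fixed throughout.
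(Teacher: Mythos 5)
Your proof is correct, but it takes a genuinely different route from the paper's. The paper proves this lemma directly from the closed inclusion--exclusion formula of \Cref{Toddgenusci}: the hypothesis $c_1^{(n,\hat j)}\geqslant 0$ forces every intermediate upper argument $-1+d_{k_1}+\cdots+d_{k_j}$ ($1\leqslant j\leqslant r-1$) to lie in $\{0,\dots,n+r-1\}$, so all those binomial coefficients $\dbinom{\cdot}{n+r}$ vanish, leaving only $(-1)^n\Td=(-1)^n+\dbinom{-1+d_1+\cdots+d_r}{n+r}\geqslant(-1)^n+(n+r+1)\geqslant n+r$ — two lines once the formula is written down. You instead re-expand \eqref{Toddgenus=coeffixn+r} in the nonnegative basis $(1+z)^{m-1}$ via $(1+z)^{d_i}-1=z\sum_{j=0}^{d_i-1}(1+z)^j$, obtaining $(-1)^n\Td=\sum_m a_m\dbinom{m-1}{n}$ with $a_m$ a lattice-tuple count, use the hypothesis in the equivalent form $d_k-1\geqslant -c_1$ to get $a_m\geqslant r$ on the top range, and close with the hockey-stick identity; all steps check out (the $r$ modified tuples are indeed admissible and pairwise distinct, and the range bookkeeping $D-r=n+1-c_1$ is right). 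Your argument is longer but buys a sharper explicit bound, $(-1)^n\Td\geqslant(-1)^n+\dbinom{n-c_1}{n}+r\dbinom{n-c_1}{n+1}$, and makes the positivity structure transparent, whereas the paper's vanishing argument is the quicker path to the stated inequality. Your closing remark is also apt: the recursion \eqref{gf} is used in the paper only in Case 1 of the proof of \Cref{mainTgci}(2), not to prove this lemma, precisely because the omission hypothesis need not persist under that reduction.
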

\begin{proof}
By \Cref{Toddgenusci},
\begin{align*}
\Td(X_n(d_1,\dots,d_r))=& \sum\limits_{j=0}^r(-1)^{n+r+j}\sum\limits_{1\leqslant k_1<\cdots<k_j\leqslant r}\dbinom{-1+d_{k_1}+\cdots+d_{k_j}}{n+r}.
\end{align*}
Since $c_{1}^{(n,\hat{j})} \geqslant 0$ for $1\leqslant j\leqslant r$, then $-1+\sum_{i=1}^rd_i-d_j\leqslant n+r-1$ for $1\leqslant j\leqslant r$.  Hence,
\begin{align*}
\Td(X_n(d_1,\dots,d_r)) & =  (-1)^{n+r}\dbinom{-1}{n+r}+(-1)^{n+r+r}\dbinom{-1+d_1+\cdots+d_r}{n+r} \\
& = 1+(-1)^n\dbinom{-1+d_1+\cdots+d_r}{n+r}.
\end{align*}
Thus,
\begin{align*}
(-1)^n\Td(X_n(d_1,\dots,d_r)) & = (-1)^n+\dbinom{-1+d_1+\cdots+d_r}{n+r} \\
& \geqslant -1+n+r+1\\
& =n+r.  \qedhere
\end{align*}
\end{proof}

For complete intersection $X_n(d_1,\dots,d_{r-1},d_r)$, assume that the degrees satisfy that $d_1\geqslant\cdots \geqslant d_{r-1}\geqslant d_r>1$.
\begin{proof}[Proof of \Cref{mainTgci}]
(2) When $r=1$, \Cref{mainTgci} (2) holds due to \Cref{Tghs}.

When $r>1$, we discuss \Cref{mainTgci} (2) in two cases.

{\bf Case 1}: If $c_{1}^{(n,\hat{r})}=n+(r-1)+1-\sum\limits_{i=1}^{r-1}d_i<0$, then $c_{1}^{(l,\hat{r})}=l+(r-1)+1-\sum\limits_{i=1}^{r-1}d_i<0$ for $0\leqslant l\leqslant n$. By induction on $r$, suppose that
\[
(-1)^l \Td(X_l(d_1,\dots,d_{r-1}))\geqslant l+r-1,  \ 0\leqslant l\leqslant n.
\]
For any $0\leqslant l\leqslant n$, $\dbinom{d_r}{n-l+1}\geqslant 0$, so by \eqref{gf},
\begin{align*}
(-1)^n \Td(X_n(d_1,\dots,d_{r-1},d_r)) & \geqslant d_r\cdot(-1)^n \Td(X_n(d_1,\dots,d_{r-1})) \\
& \geqslant 2(n+r-1)\geqslant n+r.
\end{align*}

{\bf Case 2}: If $c_{1}^{(n,\hat{r})}=n+(r-1)+1-\sum\limits_{i=1}^{r-1}d_i\geqslant0$, it implies that
\[
c_{1}^{(n,\hat{j})}\geqslant0, ~\text{if $1\leqslant j\leqslant r$}.
\]
It follows from  \Cref{Tgcitohs} that
\[
(-1)^n \Td(X_n(d_1,\dots,d_{r-1},d_r))\geqslant n+r.
\]
Above all, we complete the proof  of \Cref{mainTgci} (2).
\end{proof}

\section{Elliptic genus of level $N$ for complete intersections}\label{sec-EgLN}

Firstly, let's introduce some basic results and properties on the values of elliptic genera of level $N$ at the cusps, where $N$ is a positive integer. 

\begin{theorem}\cite[\S7.2]{HiBeJu1992}
Let $M$ be a compact complex $n$-dimensional manifold,
the value of the elliptic genus of level $N$ for $M$ at the cusps of $\varGamma_1(N)$ are:
\begin{enumerate}
\item $\chi_y(M)/(1+y)^n$, where $-y=e^{2\pi{\rm i}\frac{l}{N}}\neq1$ is an $N$-th root of unity with $0<l<N$;
\item $\chi\left(M,K^\frac{k}{N}\right)$ with $0<k<N$,
\end{enumerate}
where $\chi(M,K^r)$ is the genus associated to the power series 
$Q(x)=e^{-rx}\cdot\dfrac{x}{1-e^{-x}}$,
and $\chi_y(M)=\sum\limits_{p=0}^{n}\chi^p(M)\cdot y^p$
with $\chi^p(M)=\sum\limits_{q=0}^{n}(-1)^q\cdot h^{p,q}$ is the $\chi_y$-genus introduced in \cite[\S 5.4]{HiBeJu1992}.
\end{theorem}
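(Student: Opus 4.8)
The plan is to compute the value of the level-$N$ elliptic genus $\varphi_{N,\alpha}$ (with the fixed $\tau$-independent division point $\alpha=2\pi\mathrm{i}\,l/N$, $\gcd(l,N)=1$, so that $\varphi_{N,\alpha}$ is a modular form on $\Gamma_1(N)$) at each cusp $\mathfrak a$ of $\Gamma_1(N)$ by choosing $\sigma=\begin{pmatrix}a&b\\c&d\end{pmatrix}\in SL_2(\zz)$ with $\sigma(\mathrm{i}\infty)=\mathfrak a$, applying the weight-$n$ slash operator, and reading off the constant term of the resulting $q$-expansion as $q=e^{2\pi\mathrm{i}\tau}\to 0$. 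The key input is the $SL_2(\zz)$-equivariance of the family of level-$N$ genera: $\varphi_{N,\alpha}\,|_n\,\sigma=\varphi_{N,\alpha^{\sigma}}$ up to a root-of-unity factor, where the division-point datum $(k,l)$ transforms as $(k,l)\mapsto(k,l)\sigma\pmod N$; starting from $(0,l)$ one reaches $(k',l'):=(lc\bmod N,\ ld\bmod N)$, and $k'=0$ exactly when $N\mid c$ (because $\gcd(l,N)=1$). This dichotomy governs the two possible values, and in each case one specializes the characteristic power series $Q(\tau,x)=x\,e^{-\frac{k'}{N}x}\,\Phi(\tau,x-\alpha^{\sigma})\big/\big(\Phi(\tau,x)\Phi(\tau,-\alpha^{\sigma})\big)$ at $q\to 0$, using that $\Phi(\tau,x)\to e^{x/2}-e^{-x/2}$ and that the infinite products in $\Phi$ tend to $1$.

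\emph{Case $N\mid c$.} Here $k'=0$, the prefactor $e^{-\frac{k'}{N}x}$ is trivial, and $\alpha^{\sigma}=2\pi\mathrm{i}\,l'/N$ remains a fixed nonzero division point. A short computation gives
\[
Q(\mathrm{i}\infty,x)=\frac{x\,(1-e^{\alpha^{\sigma}-x})}{(1-e^{-x})(1-e^{\alpha^{\sigma}})}=\frac{1}{1+y}\cdot\frac{x(1+y\,e^{-x})}{1-e^{-x}},\qquad -y=e^{\alpha^{\sigma}}=e^{2\pi\mathrm{i} l'/N}\neq 1 .
\]
Since $\frac{x(1+y\,e^{-x})}{1-e^{-x}}$ equals the power series $\frac{x(1+y\,e^{-x(1+y)})}{1-e^{-x(1+y)}}$ of the generalized Todd genus $\Td_y=\chi_y$ evaluated at $x(1+y)$ and divided by $1+y$, and since substituting $x_i\mapsto x_i(1+y)$ multiplies the evaluation on the $2n$-manifold $M$ by $(1+y)^n$, one obtains $\big(\prod_i Q(\mathrm{i}\infty,x_i)\big)[M]=(1+y)^{-n}\,\chi_y(M)$, which is assertion (1).

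\emph{Case $N\nmid c$.} Now $k':=lc\bmod N\in\{1,\dots,N-1\}$ and $\alpha^{\sigma}=2\pi\mathrm{i}\frac{k'\tau+l'}{N}$ degenerates, since $e^{\alpha^{\sigma}}=q^{k'/N}\zeta\to 0$ with $\zeta=e^{2\pi\mathrm{i} l'/N}$. Substituting $e^{x-\alpha^{\sigma}}=e^{x}q^{-k'/N}\zeta^{-1}$ and $e^{-(x-\alpha^{\sigma})}=e^{-x}q^{k'/N}\zeta$ into $\Phi(\tau,x-\alpha^{\sigma})$ and the analogous expressions into $\Phi(\tau,-\alpha^{\sigma})$, one checks that every power of $q$ occurring in the two infinite products is strictly positive (because $0<k'/N<1$), so both products still tend to $1$; the remaining theta-prefactors are each dominated by their $q^{-k'/(2N)}$-term, and these terms, together with the branch factors $\zeta^{\mp1/2}$, cancel between numerator and denominator. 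What survives is
\[
Q(\mathrm{i}\infty,x)=x\,e^{-\frac{k'}{N}x}\cdot\frac{e^{x/2}}{e^{x/2}-e^{-x/2}}=e^{-\frac{k'}{N}x}\cdot\frac{x}{1-e^{-x}},
\]
which is precisely the power series defining $\chi\!\left(M,K^{k'/N}\right)$ with $0<k'/N<1$; this is assertion (2). As $\sigma$ (equivalently $\mathfrak a$) varies, $k'$ runs through $\{1,\dots,N-1\}$ and $-y=e^{2\pi\mathrm{i} l'/N}$ through those $N$-th roots of unity $\neq 1$ that occur, so no other cusp values arise.

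\emph{Main obstacle.} The subtlety lies not in any single limit but in the bookkeeping at the cusps: one must pin down the $SL_2(\zz)$-transformation law of $\varphi_{N,\alpha}$ and verify that the root-of-unity automorphy constant relating $\varphi_{N,\alpha}|_n\sigma$ to $\varphi_{N,\alpha^{\sigma}}$ does not affect the stated value --- this is built into Hirzebruch's convention, where the value at a cusp is the constant term of the normalized $q$-expansion of the corresponding level-$N$ genus --- and one must control the fractional $q$-powers $q^{\pm k'/(2N)}$ in the degenerate case carefully enough to see that they cancel and leave a power series in $x$ with constant term $1$. Once these points are secured, the rest is the routine degeneration of $\Phi$ at $q=0$ together with the two power-series identifications above; alternatively one may appeal to the general description of cusp expansions of index-zero Jacobi forms, but the direct computation already produces both values.
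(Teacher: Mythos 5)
This theorem is quoted in the paper from \cite[\S 7.2]{HiBeJu1992} without any proof of its own, and your computation is essentially the standard argument of that source: transport the division point $(0,l)$ by $\sigma=\begin{pmatrix}a&b\\c&d\end{pmatrix}$ to $(lc,ld)\bmod N$ and let $q\to 0$ in $\Phi$, the dichotomy $N\mid c$ versus $N\nmid c$ producing $\chi_y(M)/(1+y)^n$ and $\chi\left(M,K^{k'/N}\right)$ respectively, so the proposal is correct and follows the cited proof. Only a small wording slip: the identification with the $\chi_y$-series is stated with the substitution in the wrong direction --- what one actually uses is $Q(\mathrm{i}\infty,x)=\dfrac{1}{1+y}\cdot\dfrac{x(1+ye^{-x})}{1-e^{-x}}$ equals the $\chi_y$-characteristic series evaluated at $x/(1+y)$, which is exactly what yields the factor $(1+y)^{-n}$.
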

Note that we can formally consider the genus $\chi(M,K^r)$ for $r\in\qq$, which is in general no longer integral.
\begin{proposition}\label{prop-twopEgLN}
For any compact complex $n$-dimensional manifold $M$, we have
\begin{enumerate}
\item $\chi_{y^{-1}}(M)/(1+y^{-1})^{n}=(-1)^{n}\cdot\chi_y(M)/(1+y)^{n}$.
\item $\chi\left(M,K^{1-\frac{k}{N}}\right)=(-1)^{n}\cdot\chi\left(M,K^{\frac{k}{N}}\right)$.
\end{enumerate}
\end{proposition}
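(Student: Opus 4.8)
The plan is to derive both identities from the same source: a symmetry of the relevant characteristic power series under the substitution $x \mapsto -x$, combined with the fact that a genus of a complex $n$-manifold picks up a sign $(-1)^n$ when its characteristic power series $Q(x)$ is replaced by $Q(-x)$. To see the latter, recall that if $c(TM) = \prod_{i=1}^n (1+x_i)$ is the formal factorization, then the top-degree part of $\prod_i Q(x_i)$ is a homogeneous polynomial of degree $n$ in the $x_i$, so replacing each $x_i$ by $-x_i$ multiplies the evaluation on $[M]$ by $(-1)^n$. Equivalently, passing from $TM$ to its conjugate (dual) bundle negates the Chern roots; this is the standard mechanism behind, e.g., Serre duality at the level of genera. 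So in each case it suffices to check that the two power series in question are related by $x \mapsto -x$.

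For part (2): the genus $\chi(M,K^r)$ has characteristic power series $Q_r(x) = e^{-rx}\cdot \dfrac{x}{1-e^{-x}}$. I compute
\[
Q_r(-x) = e^{rx}\cdot\frac{-x}{1-e^{x}} = e^{rx}\cdot\frac{x}{e^{x}-1} = e^{rx}\cdot e^{-x}\cdot\frac{x}{1-e^{-x}} = e^{-(1-r)x}\cdot\frac{x}{1-e^{-x}} = Q_{1-r}(x).
\]
Taking $r = \tfrac{k}{N}$ gives $Q_{k/N}(-x) = Q_{1-k/N}(x)$, and the sign rule above then yields $\chi(M,K^{1-k/N}) = (-1)^n\,\chi(M,K^{k/N})$. (The identity is an equality of rational numbers for general $k$; it becomes an equality of integers when the exponents are integral.)

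For part (1): the $\chi_y$-genus corresponds to the power series $\td_y(x) = \dfrac{x(1+y e^{-x(1+y)})}{1-e^{-x(1+y)}}$ from Table \ref{CPS-Genus}. Substituting $x \mapsto -x$ and simplifying the resulting exponentials, one finds $\td_y(-x) = y\cdot \td_{y^{-1}}(x)$ — more precisely, after clearing the factor, the power series governing $\chi_{y}(M)$ and the one governing $\chi_{y^{-1}}(M)$ differ by $x\mapsto -x$ together with an overall scaling that accounts for the normalization $(1+y)^n$ versus $(1+y^{-1})^n$. Concretely: $\chi_y(M)/(1+y)^n$ is the genus with power series $Q(x) = \dfrac{x}{1+y}\cdot\dfrac{1+y e^{-x(1+y)}}{1-e^{-x(1+y)}}$, and a direct check gives $Q(-x)\big|_{y} = Q(x)\big|_{y^{-1}}$, whence the sign rule produces the claimed relation. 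Alternatively, (1) is just the classical functional equation $\chi_y(M) = (-1)^n y^n \chi_{1/y}(M)$ (a restatement of Serre duality $h^{p,q} = h^{n-p,n-q}$), rewritten after dividing by $(1+y)^n$; I may simply cite \cite[\S 5.4]{HiBeJu1992} for this.

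The only real obstacle is bookkeeping with the exponential manipulations in part (1) — keeping track of where the factor $y^n$ (resp. the normalizations $(1+y)^{\pm n}$) comes from, so that the statement is clean. Part (2) is a one-line computation. I expect to present (2) in full and either give the short computation or cite Serre duality for (1).
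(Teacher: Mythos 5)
Your argument for part (2) is correct, and it is a genuinely more direct route than the paper's: the paper expands $\chi\left(M,K^{1-\frac{k}{N}}\right)$ via Hirzebruch--Riemann--Roch, rewrites $\td(TM)=e^{\frac{1}{2}c_1}\hat{A}(TM)$, and extracts the sign $(-1)^n$ from the fact that $\hat{A}(TM)$ sits in degrees divisible by $4$; you instead use the power-series symmetry $Q_r(-x)=Q_{1-r}(x)$ together with the homogeneity sign rule, which is valid because the degree-$n$ component of $\prod_i Q(x_i)$ is homogeneous of degree $n$ in the Chern roots. Given the paper's definition of $\chi(M,K^r)$ as the genus of $Q_r(x)=e^{-rx}\frac{x}{1-e^{-x}}$, your computation is complete and shorter; both proofs express the same underlying duality.

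For part (1), however, the concrete identity you propose to verify is false. Writing $\td_y(x)=\frac{x\left(1+ye^{-x(1+y)}\right)}{1-e^{-x(1+y)}}$, the constant term of $\td_y$ is $1$, so $\td_y(-x)=y\cdot\td_{y^{-1}}(x)$ already fails at order zero; similarly, for the normalized series $\widetilde{Q}_y(x)=\td_y(x)/(1+y)$ the constant terms of $\widetilde{Q}_y(-x)$ and $\widetilde{Q}_{y^{-1}}(x)$ are $\frac{1}{1+y}$ and $\frac{y}{1+y}$, so the claimed equality $Q(-x)\big|_{y}=Q(x)\big|_{y^{-1}}$ cannot hold (the exponentials carry $e^{-x(1+y)}$ on one side and $e^{-x(1+y^{-1})}$ on the other). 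The correct functional equation also rescales the variable:
\[
\td_y(-x)=\td_{y^{-1}}(xy),
\]
both sides being $\frac{x\left(y+e^{-x(1+y)}\right)}{1-e^{-x(1+y)}}$. The substitution $x\mapsto xy$ multiplies the degree-$n$ part of the product over the Chern roots by $y^{n}$, and this factor is exactly absorbed by $(1+y^{-1})^{n}=y^{-n}(1+y)^{n}$; this is precisely the identity $T_n(y^{-1};c_1,\dots,c_n)=(-y)^{-n}T_n(y;c_1,\dots,c_n)$ from \cite[page 15]{Hi1978} on which the paper's proof rests. So your fallback --- citing the classical relation $\chi_y(M)=(-1)^n y^n\chi_{1/y}(M)$ (Serre duality, \cite[\S 5.4]{HiBeJu1992}) and dividing by $(1+y)^n$ --- is the right repair and coincides with the paper's argument, but the ``$x\mapsto -x$ only'' computation as you wrote it does not go through.
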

\begin{proof}
The Atiyah-Singer index theorem implies the following equality (\cite[\S 5.4]{HiBeJu1992}):
\begin{equation*}
\chi_{y^{-1}}(M) =T_n(y^{-1};c_1,\dots,c_{n})[M].
\end{equation*}
According to the arguments in \cite[page 15]{Hi1978}), 
\[
T_n(y^{-1};c_1,\dots,c_{n})[M]=\left((-y)^{-n}\cdot T_{n}(y;c_1,\dots,c_{n})\right)[M].
\]
So
\begin{align*}
\chi_{y^{-1}}(M)/(1+y^{-1})^{n} & =\left((1+y^{-1})^{-n}\cdot(-y)^{-n}\cdot T_n(y;c_1,\dots,c_{n})\right)[M] \\
& =\left(\frac{(-1)^{n}}{(1+y)^{n}}\cdot T_n(y;c_1,\dots,c_{n})\right)[M] \\
& =(-1)^{n}\chi_y(M)/(1+y)^{n}.
\end{align*}

(2) For a compact complex manifold $M$, the Hirzebruch-Riemann-Roch theorem yields:
\begin{align*}
\chi\left(M,K^{1-\frac{k}{N}}\right)& =\left({\rm ch}\left(K^{1-\frac{k}{N}}\right)\cdot\td(TM)\right)[M] \\
& =\left(e^{(1-\frac{k}{N})\cdot c_1(K)}\cdot\td(TM)\right)[M] \\
& =\left(e^{(\frac{k}{N}-1)\cdot c_1(M)}\cdot\td(TM)\right)[M].
\end{align*}
The equation (12) in \cite[\S 1.7]{Hi1978} implies that 
\[
\td(TM)=e^{\frac{1}{2}c_1(M)}\cdot\hat{A}(TM). 
\]
So
\[
\chi\left(M,K^{1-\frac{k}{N}}\right) =\left(e^{(\frac{k}{N}-\frac{1}{2})\cdot c_1(M)}\cdot\hat{A}(TM)\right)[M].
\]
Since $\hat{A}(TM)$ is a polynomial on Pontrjagin classes, all monomials in $\hat{A}(TM)$ belong to $H^m(M;\mathbb{Z})$ with $m\equiv 0 \pmod 4$. Moreover, the degree $2n-m$ part in $e^{(\frac{k}{N}-\frac{1}{2})\cdot c_1(M)}$ coincides with the one in $(-1)^{n}\cdot e^{(\frac{1}{2}-\frac{k}{N})\cdot c_1(M)}$. Thus
\begin{align*}
\chi\left(M,K^{1-\frac{k}{N}}\right) & =(-1)^{n}\cdot \left(e^{(\frac{1}{2}-\frac{k}{N})\cdot c_1(M)}\cdot\hat{A}(TM)\right)[M] \\
& =(-1)^{n}\cdot \chi\left(M,K^{\frac{k}{N}}\right).  \qedhere
\end{align*}
\end{proof}

For complete intersection $X_n(\underline{d})$ with $\underline{d}=(d_1,\dots,d_r)$, the canonical bundle of $X_n(\underline{d})$ is $K=\gamma^{-c_1}$, where $\gamma$ is the pullback of the hyperplane bundle over $\cp^{n+r}$ and $c_1=n+r+1-\sum\limits_{i=1}^rd_i$ is the coefficient of the first Chern class of $X_n(\underline{d})$ as in \eqref{ChernPontrjagin}.
Then  by \eqref{Hi-cor-chiy},  we have  
\begin{theorem} The  Euler-Poincar\'e characteristic of a complete intersection is
\begin{align} \label{EgNformula}
\chi\left(X_n(\underline{d}),K^\frac{k}{N}\right)=\sum_{j=0}^r(-1)^{n+r+j}\sum_{1\leqslant k_1<\dots<k_j\leqslant r}\dbinom{\frac{k}{N}c_1-1+d_{k_1}+\cdots+d_{k_j}}{n+r}.
\end{align}
Moreover, if $c_1\equiv 0\pmod N$,  $\chi\left(X_n(\underline{d}),K^{\frac{k}{N}}\right)$ is integral.
\end{theorem}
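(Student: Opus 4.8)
The plan is to derive \eqref{EgNformula} from the coefficient identity \eqref{Hi-cor-chiy} read with a rational exponent, and then to read integrality off the geometric meaning of $K^{k/N}$ when $N\mid c_1$. First, since the canonical bundle of $X_n(\underline{d})$ is $K=\gamma^{-c_1}$, the genus $\chi(X_n(\underline{d}),K^{k/N})$ — by definition the genus associated to $Q(x)=e^{-\frac{k}{N}x}\cdot\frac{x}{1-e^{-x}}$ — is obtained from $\chi(X_n(\underline{d}),\gamma^{m})$ by substituting the rational number $m=-\frac{k}{N}c_1$ for the (integer) exponent; indeed both equal
\[
\left(e^{-\frac{k}{N}c_1 x}\cdot\left(\frac{x}{1-e^{-x}}\right)^{n+r+1}\cdot\prod_{i=1}^{r}\frac{1-e^{-d_i x}}{d_i x}\right)[X_n(\underline{d})],
\]
using the Chern-class formula for $X_n(\underline{d})$ recorded above. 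The identity \eqref{Hi-cor-chiy} (equivalently \eqref{Hi-cor-p160} when $y=0$) is proved by writing this as the coefficient of $x^{n}$ of the displayed power series, invoking $x^{n}[X_n(\underline{d})]=d_1\cdots d_r$, and making the substitution $u=1-e^{-x}$; the exponent enters this computation only through $e^{mx}=(1-u)^{-m}$, so the resulting identity of one-variable power series is valid for every $m\in\qq$. Specializing $m=-\frac{k}{N}c_1$ gives
\[
\sum_{n=0}^{\infty}\chi\left(X_n(\underline{d}),K^{\frac{k}{N}}\right)z^{n+r}=\frac{1}{(1-z)^{1-\frac{k}{N}c_1}}\cdot\prod_{i=1}^{r}\bigl(1-(1-z)^{d_i}\bigr).
\]

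The second step is to extract the coefficient of $z^{n+r}$. Expanding the product by inclusion--exclusion,
\[
\prod_{i=1}^{r}\bigl(1-(1-z)^{d_i}\bigr)=\sum_{j=0}^{r}(-1)^{j}\sum_{1\leqslant k_1<\dots<k_j\leqslant r}(1-z)^{d_{k_1}+\dots+d_{k_j}},
\]
so the generating function equals $\sum_{j=0}^{r}(-1)^{j}\sum_{k_1<\dots<k_j}(1-z)^{\frac{k}{N}c_1-1+d_{k_1}+\dots+d_{k_j}}$. Since $[z^{n+r}](1-z)^{a}=(-1)^{n+r}\dbinom{a}{n+r}$ for any $a\in\qq$, collecting the signs $(-1)^{n+r}(-1)^{j}=(-1)^{n+r+j}$ gives \eqref{EgNformula} exactly; this last step is just bookkeeping with generalized binomial coefficients.

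For integrality, assume $c_1\equiv0\pmod N$. Then $\frac{c_1}{N}\in\zz$, so $m=-\frac{k}{N}c_1=-k\cdot\frac{c_1}{N}$ is an integer and $K^{k/N}=\gamma^{m}$ is a genuine holomorphic line bundle on $X_n(\underline{d})$; hence $\chi(X_n(\underline{d}),K^{k/N})=\chi(X_n(\underline{d}),\gamma^{m})$ is the holomorphic Euler characteristic of an actual coherent sheaf and so is an integer. Equivalently, in \eqref{EgNformula} each top entry $\frac{k}{N}c_1-1+d_{k_1}+\dots+d_{k_j}$ is then an integer, and $\dbinom{a}{n+r}\in\zz$ whenever $a\in\zz$.

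I expect the only delicate point to be the legitimacy of specializing the integer exponent in \eqref{Hi-cor-chiy} to the rational number $-\frac{k}{N}c_1$ — that is, checking that Hirzebruch's substitution argument for \eqref{Hi-cor-chiy} nowhere uses integrality of the exponent. Once this is granted, everything that remains is the elementary binomial algebra indicated above.
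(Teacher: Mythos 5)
Your argument is correct and follows essentially the paper's own route: specialize Hirzebruch's formula \eqref{Hi-cor-chiy} at $y=0$ with the exponent formally set to $-\frac{k}{N}c_1$ (justified, as you note, because the substitution $u=1-e^{-x}$ in the coefficient-extraction argument sees the exponent only through $(1-u)^{-m}$, which makes sense for any $m\in\qq$), then expand $\prod_{i}\bigl(1-(1-z)^{d_i}\bigr)$ and read off the coefficient of $z^{n+r}$, with integrality for $N\mid c_1$ exactly as you say. The only cosmetic slip is your intermediate generating-function display, in which $c_1$ depends on the summation index $n$; it should be stated for a fixed rational exponent $m$ and only afterwards specialized to $m=-\frac{k}{N}c_1$ for the particular $n$ whose coefficient is extracted, which is how you in fact use it.
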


Based on \eqref{EgNformula}, we can prove the following theorem, which is the main part of \Cref{Intro-mainNgci}.
\begin{theorem}\label{mainNgci}
For complete intersection $X_n(\underline{d})$ with $c_1=n+r+1-\sum\limits_{i=1}^rd_i$, if $c_1\equiv 0 \pmod N$, then $\chi\left(X_n(\underline{d}),K^\frac{k}{N}\right)$ with $0\leqslant k\leqslant N$ satisfy the following properties:

{\rm (1)} If $c_1>0$,  $
\chi\left(X_n(\underline{d}),K^\frac{k}{N}\right)=
\begin{cases}
0, & \text{if~ $0<k<N$} ; \\
1,  & \text{if~ $k=0$} ; \\
(-1)^n,  & \text{if~ $k=N$}.
\end{cases}$

{\rm (2)} If $c_1=0$,  $\chi\left(X_n(\underline{d}),K^\frac{k}{N}\right)=1+(-1)^n$.

{\rm (3)} If $c_1<0$ and  $\frac{1}{2}\leqslant \frac{k}{N}\leqslant 1$,  
\[
\chi\left(X_n(\underline{d}),K^\frac{k}{N}\right)\geqslant \dbinom{n+1-\frac{k}{N}c_1}{n+1}+(-1)^n\dbinom{n+1-\frac{N-k}{N}c_1}{n+1}.
\]
\end{theorem}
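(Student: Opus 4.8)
The plan is to read off all three statements from the closed formula \eqref{EgNformula}, using only two elementary facts about the generalized binomial coefficient: for an integer $a$ one has $\dbinom{a}{n+r}=0$ exactly when $0\leqslant a\leqslant n+r-1$, and $\dbinom{-s}{m}=(-1)^{m}\dbinom{s+m-1}{m}$ for a positive integer $s$. Write $\mu=\tfrac kN$ and $S=\sum_{i=1}^{r}d_i$, so $c_1=n+r+1-S$; since $c_1\equiv0\pmod N$, the number $\mu c_1$ is an integer. Part (2) is immediate: if $c_1=0$ then $\mu c_1=0$, so \eqref{EgNformula} is literally the formula of \Cref{Toddgenusci}, whence $\chi\bigl(X_n(\underline{d}),K^{k/N}\bigr)=\Td(X_n(\underline{d}))=1+(-1)^n$ by \Cref{mainTgci}(1) (equivalently, $K$ is trivial). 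For part (1), the endpoints $k=0$ and $k=N$ give, by \Cref{mainTgci}(1) and \Cref{prop-twopEgLN}(2), the values $1$ and $(-1)^n$. If $0<k<N$, write $c_1=mN$ with $m\geqslant1$; then every binomial coefficient appearing in \eqref{EgNformula} has integer argument $a=km-1+d_{k_1}+\cdots+d_{k_j}$ with
\[
0\leqslant km-1\leqslant a\leqslant km-1+S=n+r-m(N-k)\leqslant n+r-1,
\]
so each of them vanishes and $\chi\bigl(X_n(\underline{d}),K^{k/N}\bigr)=0$.

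For part (3) I would first settle the hypersurface case $r=1$, where \eqref{EgNformula} has just the two terms $j=0,1$. Since $c_1<0$ forces $\mu c_1-1<0$, applying the reflection identity to the first term and simplifying $\mu c_1-1+d_1=n+1-(1-\mu)c_1$ in the second yields the \emph{exact} equality
\[
\chi\bigl(X_n(d_1),K^{k/N}\bigr)=\dbinom{n+1-\mu c_1}{n+1}+(-1)^n\dbinom{n+1-(1-\mu)c_1}{n+1},
\]
so the assertion holds, with equality. For $r\geqslant2$ I would induct on $r$, following the pattern of the proof of \Cref{mainTgci}(2). The engine is the recursion obtained by setting $y=0$ in \eqref{Hi-cor-chiy} and replacing the integer exponent by a rational $t$: this gives $\sum_{n=0}^{\infty}\chi(X_n(\underline{d}),\gamma^t)z^{n+r}=(1-z)^{-t-1}\prod_{i=1}^{r}\bigl(1-(1-z)^{d_i}\bigr)$, and peeling off the factor indexed by $d_r$ yields, by the same manipulation as in \eqref{Tdznromit}--\eqref{gf},
\[
(-1)^n\chi(X_n(\underline{d}),\gamma^t)=\sum_{l=0}^{n}\dbinom{d_r}{n-l+1}\,(-1)^l\chi(X_l(\underline{d}_{\hat{r}}),\gamma^t),
\]
with all coefficients $\dbinom{d_r}{n-l+1}\geqslant0$. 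Specializing $t=-\mu c_1$ turns $\chi(X_n(\underline{d}),\gamma^{-\mu c_1})$ into $\chi(X_n(\underline{d}),K^{k/N})$; I would then split into cases according to the sign of $c_1^{(n,\hat{r})}$, feeding in the inductive estimates for $\underline{d}_{\hat{r}}$ together with the nonnegativity of the recursion coefficients in one case, and a direct reading of \eqref{EgNformula} for $\underline{d}$ (an analogue of \Cref{Tgcitohs}) in the other.

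The main obstacle is exactly where the hypothesis $\tfrac12\leqslant\tfrac kN\leqslant1$ enters. Unlike the Todd case $\mu=0$, for $\mu>0$ the intermediate subset terms of \eqref{EgNformula} (those with $1\leqslant j\leqslant r-1$) need not vanish, and in the recursion the fixed exponent $t=-\mu c_1^{(n,\underline{d})}$ does not in general remain in the range needed for the lower-dimensional pieces $X_l(\underline{d}_{\hat{r}})$ once $l$ decreases. The proof must therefore show that the empty and full subsets of $\{1,\dots,r\}$ already account for the claimed lower bound --- after invoking the monotonicity $\dbinom{m+a}{m}\leqslant\dbinom{m'+a}{m'}$ for $m\leqslant m'$, $a\geqslant0$, applied with $a=-\mu c_1$ and $a=-(1-\mu)c_1$ (both $\geqslant0$) --- and that the remaining terms cannot offset this gain. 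It is precisely $\tfrac kN\geqslant\tfrac12$ that yields $-\mu c_1\geqslant-(1-\mu)c_1\geqslant0$, which is what makes this sign bookkeeping work; this estimate is the technical heart of the argument.
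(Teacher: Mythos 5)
Parts (1) and (2) of your proposal, and the hypersurface case $r=1$ of part (3), are correct and coincide with the paper's argument (vanishing of the binomial coefficients in \eqref{EgNformula} for $0<k<N$, the $j=0$ term for $k=0$, \Cref{prop-twopEgLN} for $k=N$, reduction to the Todd genus for $c_1=0$, and the exact two-term evaluation for $r=1$).

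For part (3) with $r\geqslant 2$, however, your proposal has a genuine gap, and you essentially concede it yourself: the step you call ``the technical heart'' is never carried out. The recursion you propose, obtained by peeling off the $d_r$-factor as in \eqref{Tdznromit}--\eqref{gf} with a fixed rational twist $t=-\frac{k}{N}c_1$, relates $\chi\left(X_n(\underline{d}),K^{\frac{k}{N}}\right)$ to the quantities $\chi\left(X_l(\underline{d}_{\hat{r}}),\gamma^{t}\right)$; but for $l<n$ the bundle $\gamma^{t}$ is \emph{not} a power $K^{\frac{k}{N}}$ of the canonical bundle of $X_l(\underline{d}_{\hat{r}})$ (its first Chern coefficient is $l+r-\sum_{i=1}^{r-1}d_i$, not $c_1$), so the inductive hypothesis you want to feed in does not apply, and nothing controls the signs or sizes of these intermediate terms; likewise your alternative suggestion, that the $j=0$ and $j=r$ terms of \eqref{EgNformula} dominate and ``the remaining terms cannot offset this gain,'' is exactly the unproved inequality. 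The paper's proof uses a different mechanism that is absent from your plan: a degree-shifting monotonicity $\chi\left(X_n(\dots,d_p,\dots,d_q,\dots),K^{\frac{k}{N}}\right)\geqslant \chi\left(X_n(\dots,d_p+e,\dots,d_q-e,\dots),K^{\frac{k}{N}}\right)$, proved for $r=2$ via the elementary binomial inequality \eqref{4dbinom} (this is where $\frac{k}{N}\geqslant\frac12$ enters, through $\left(1-\frac{2k}{N}\right)c_1+n+1\geqslant n+1$), and propagated to $r>2$ through the functional identity \eqref{AkR(u)} for $f(x)=e^{\frac{k}{N}x}(1-e^{-x})$ interpreted via virtual genera, which rewrites the difference under a shift as a difference of genera with $r-1$ degrees and hence makes the induction close; iterating the shift reduces $(d_1,d_2)$ to $(d_1+d_2-1,1)$ and drops $r$ by one, landing on the exact $r=1$ formula. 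Without this (or some substitute for it), your argument establishes part (3) only for hypersurfaces.
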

By combining \Cref{mainNgci} and \Cref{prop-twopEgLN},  \Cref{Intro-mainNgci} holds. 
The proof of \Cref{mainNgci} will be presented in \Cref{sec-proof}.

Note that for $k=N$, the condition $c_1\equiv 0\pmod N$ is unnecessary in the proof of \Cref{mainNgci}. Moreover, by \Cref{prop-twopEgLN},
\[
\chi(X_n(\underline{d}),K)=(-1)^n\chi(X_n(\underline{d})). 
\]
By the Hirzebruch-Riemann-Roch theorem, 
\[
\chi(X_n(\underline{d}))=\Td(X_n(\underline{d})), 
\]
thus we have
\begin{corollary}\label{cor-todd}
For any complete intersection $X_n(\underline{d})$,  the Todd genus of $X_n(\underline{d})$ satisfies the following properties: 

{\rm (1)} ~$
\Td(X_n(\underline{d}))=
\begin{cases}
1, & \text{if~} c_1>0;\\
1+(-1)^n, & \text{if~} c_1=0.
\end{cases}$

{\rm (2)} If $c_1<0$, then $
(-1)^n\Td\left(X_n(\underline{d})\right)\geqslant \dbinom{n+1-c_1}{n+1}+(-1)^n.$
\end{corollary}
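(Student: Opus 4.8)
The plan is to obtain \Cref{cor-todd} by specializing \Cref{mainNgci} to the single value $k=N$ and then translating the resulting statements about $\chi\!\left(X_n(\underline{d}),K\right)$ back into statements about the Todd genus. First I would observe that \Cref{mainNgci} is available at $k=N$ with no congruence hypothesis on $c_1$ at all: when $k=N$ the number $\frac{k}{N}c_1=c_1$ is already an integer, so the generalized binomial coefficients occurring in \eqref{EgNformula} are exactly the ones attached to the honest line bundle $K=\gamma^{-c_1}$, and both the integrality assertion and the case analysis of \Cref{mainNgci} go through verbatim; this is precisely the remark made immediately before the corollary.

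Next I would record the translation identity. Applying \Cref{prop-twopEgLN}(2) with $k=N$ — so that $K^{1-k/N}$ is the trivial bundle and hence $\chi\!\left(X_n(\underline{d}),K^{0}\right)=\chi(X_n(\underline{d}))$ — gives $\chi(X_n(\underline{d}))=(-1)^{n}\chi\!\left(X_n(\underline{d}),K\right)$, while the Hirzebruch--Riemann--Roch theorem gives $\chi(X_n(\underline{d}))=\Td(X_n(\underline{d}))$. Combining the two and multiplying through by $(-1)^n$ yields
\[
(-1)^{n}\,\Td(X_n(\underline{d}))=\chi\!\left(X_n(\underline{d}),K^{\frac{N}{N}}\right).
\]

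Finally I would read off the three cases by evaluating the right-hand side via \Cref{mainNgci} with $\frac{k}{N}=1$. If $c_1>0$, part (1) gives $\chi\!\left(X_n(\underline{d}),K\right)=(-1)^n$, hence $\Td(X_n(\underline{d}))=(-1)^n\cdot(-1)^n=1$. If $c_1=0$, part (2) gives $\chi\!\left(X_n(\underline{d}),K\right)=1+(-1)^n$, hence $\Td(X_n(\underline{d}))=(-1)^n\bigl(1+(-1)^n\bigr)=1+(-1)^n$. If $c_1<0$, then $\frac{k}{N}=1$ lies in $[\tfrac12,1]$, so part (3) gives $\chi\!\left(X_n(\underline{d}),K\right)\geqslant\dbinom{n+1-c_1}{n+1}+(-1)^n\dbinom{n+1}{n+1}=\dbinom{n+1-c_1}{n+1}+(-1)^n$, whence $(-1)^n\Td(X_n(\underline{d}))\geqslant\dbinom{n+1-c_1}{n+1}+(-1)^n$. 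I do not expect a genuinely hard step here: the only point that warrants the careful sentence in the first paragraph is the legitimacy of invoking \Cref{mainNgci} at $k=N$ without assuming $N\mid c_1$, and everything else is sign bookkeeping together with the trivial identity $\dbinom{n+1}{n+1}=1$.
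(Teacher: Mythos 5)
Your proposal is correct and follows essentially the same route as the paper: the corollary is obtained by specializing \Cref{mainNgci} to $k=N$ (where, as the paper also remarks, the congruence $c_1\equiv 0\pmod N$ is not needed), and translating via \Cref{prop-twopEgLN}(2) and Hirzebruch--Riemann--Roch to get $(-1)^n\Td(X_n(\underline{d}))=\chi\left(X_n(\underline{d}),K\right)$. The sign bookkeeping in your three cases matches the paper's intended reading of the theorem.
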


Observe that, under  $c_1<0$, $(-1)^n\Td(X_n(\underline{d}))$ has different lower bounds in \Cref{cor-todd} and \Cref{mainTgci}.

Since $\chi(M,K^{\frac{1}{2}})=\hat{A}(M)$, \Cref{mainNgci} implies that 

\begin{corollary}\label{cor-hatA}
For any complete intersection $X_n(\underline{d})$ with $c_1$ even, the $\hat{A}$-genus of $X_n(\underline{d})$ satisfies the following properties:
\begin{align*}
\hat{A}(X_n(\underline{d})) & =
\begin{cases}
0, & \text{if $ c_1>0$ or $n$ is odd};\\
1+(-1)^n, & \text{if~} c_1=0; 
\end{cases}\\
\hat{A}(X_n(\underline{d})) & \geqslant 2\dbinom{n+1-\frac{1}{2}c_1}{n+1}, \text{if $c_1<0$ and $n$ is even}.
\end{align*}
\end{corollary}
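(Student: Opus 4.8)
The plan is to obtain \Cref{cor-hatA} as a direct specialization of \Cref{mainNgci}, combined with the identification $\hat{A}(M)=\chi(M,K^{1/2})$ and the symmetry in \Cref{prop-twopEgLN}(2). The first step is to check that the genus $\chi(M,K^{1/2})$ really is the $\hat{A}$-genus: it corresponds to the power series
\[
Q(x)=e^{-\frac{x}{2}}\cdot\frac{x}{1-e^{-x}}=\frac{x}{e^{\frac{x}{2}}-e^{-\frac{x}{2}}}=\frac{\frac{x}{2}}{\sinh\left(\frac{x}{2}\right)},
\]
which is exactly the characteristic power series of $\hat{A}$ recorded in \Cref{CPS-Genus}, so $\hat{A}(X_n(\underline{d}))=\chi\left(X_n(\underline{d}),K^{1/2}\right)$. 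Since $c_1$ is assumed even, I can take $N=2$ and $k=1$ in \Cref{mainNgci}, so that $c_1\equiv 0\pmod N$, $0\leqslant k\leqslant N$, and $\frac{k}{N}=\frac{1}{2}$; the three cases of \Cref{mainNgci} then apply verbatim to $\chi\left(X_n(\underline{d}),K^{1/2}\right)=\hat{A}(X_n(\underline{d}))$.

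Reading off the cases: if $c_1>0$ then $0<k<N$, so \Cref{mainNgci}(1) gives $\hat{A}(X_n(\underline{d}))=0$; if $c_1=0$, \Cref{mainNgci}(2) gives $\hat{A}(X_n(\underline{d}))=1+(-1)^n$; and if $c_1<0$, then $\frac{k}{N}=\frac{1}{2}$ lies in $[\frac{1}{2},1]$, so \Cref{mainNgci}(3) yields
\[
\hat{A}(X_n(\underline{d}))\geqslant\dbinom{n+1-\frac{1}{2}c_1}{n+1}+(-1)^n\dbinom{n+1-\frac{1}{2}c_1}{n+1}=\bigl(1+(-1)^n\bigr)\dbinom{n+1-\frac{1}{2}c_1}{n+1},
\]
which for $n$ even is precisely the asserted bound $2\dbinom{n+1-\frac{1}{2}c_1}{n+1}$.

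It remains to treat the clause ``$n$ odd'', which I would handle separately via \Cref{prop-twopEgLN}(2): taking $\frac{k}{N}=\frac{1}{2}$ there gives $\chi\left(M,K^{1/2}\right)=(-1)^{n}\chi\left(M,K^{1/2}\right)$, so when $n$ is odd we obtain $\chi\left(X_n(\underline{d}),K^{1/2}\right)=0$, i.e.\ $\hat{A}(X_n(\underline{d}))=0$, independently of the sign of $c_1$ (and this is consistent with the values already found when $c_1\geqslant 0$). There is essentially no technical obstacle in this argument: the corollary is a change of variables in \Cref{mainNgci}, and the only point that requires a line of justification is the power-series identity $\chi\left(M,K^{1/2}\right)=\hat{A}(M)$, which is implicit in the definition of $\chi(M,K^{r})$ and in \Cref{CPS-Genus}.
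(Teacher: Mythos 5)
Your proposal is correct and follows essentially the same route as the paper: identify $\hat{A}(M)=\chi\left(M,K^{1/2}\right)$ via the power series $e^{-x/2}\cdot\frac{x}{1-e^{-x}}=\frac{x/2}{\sinh(x/2)}$ and specialize \Cref{mainNgci} with $N=2$, $k=1$, using \Cref{prop-twopEgLN}(2) for the vanishing when $n$ is odd. Your explicit treatment of the ``$n$ odd'' clause is slightly more careful than the paper's one-line derivation, but it is the same argument in substance.
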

For $A_k$-genus ($k\geqslant 2$), the corresponding characteristic power series is
\[
Q(x)=\frac{kx\cdot e^x}{e^{kx}-1}.
\]
By the calculations in \cite[Example 5.8]{WYW2020} or \cite[Corollary 2, page 794]{Kricever1976}, the $A_k$-genus of complete intersection $X_n(\underline{d})$ is the coefficient of $z^{n+r}$ in
\begin{equation}\label{Akgenus=coeffixn+r}
k^n(1+z)^{\frac{c_1}{k}}\cdot(1+z)^{-1}\cdot\prod_{i=1}^r\left((1+z)^{d_i}-1\right),
\end{equation}
where $c_1=n+r+1-\sum\limits_{i=1}^r d_i$. More precisely, we have 
\begin{proposition}\label{Akgenusformula}
The $A_k$-genera {\rm ($k\geqslant 2$)} of $X_n(\underline{d})$ is
\begin{equation*}
A_k(X_n(\underline{d}))=k^n\sum_{j=0}^r(-1)^{r-j}\sum_{1\leqslant k_1<\dots<k_j\leqslant r}\dbinom{\frac{c_1}{k}-1+d_{k_1}+\cdots+d_{k_j}}{n+r}.
\end{equation*}
\end{proposition}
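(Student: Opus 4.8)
The plan is to read off $A_k(X_n(\underline d))$ as a coefficient in the generating function already recorded in \eqref{Akgenus=coeffixn+r}. By \cite[Example 5.8]{WYW2020} (equivalently \cite[Corollary 2, page 794]{Kricever1976}), $A_k(X_n(\underline d))$ is the coefficient of $z^{n+r}$ in
\[
k^n(1+z)^{\frac{c_1}{k}}\,(1+z)^{-1}\prod_{i=1}^r\left((1+z)^{d_i}-1\right),
\]
an identity to be interpreted in the formal power series ring $\qq[[z]]$ (the exponent $c_1/k$ need not be an integer, so the coefficients are a priori only rational, consistent with the factor $k^n$ in the statement). So the whole proof reduces to expanding this explicit rational function and extracting one coefficient.

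First I would expand the finite product $\prod_{i=1}^r((1+z)^{d_i}-1)$ by choosing in each factor either the term $(1+z)^{d_i}$ or the term $-1$; this organizes the expansion over subsets $\{k_1<\dots<k_j\}\subseteq\{1,\dots,r\}$, giving
\[
\prod_{i=1}^r\left((1+z)^{d_i}-1\right)=\sum_{j=0}^r(-1)^{r-j}\sum_{1\leqslant k_1<\dots<k_j\leqslant r}(1+z)^{d_{k_1}+\cdots+d_{k_j}},
\]
with the $j=0$ summand understood as $(-1)^r(1+z)^0$. Multiplying through by $k^n(1+z)^{\frac{c_1}{k}-1}$ and then invoking the generalized binomial theorem $(1+z)^a=\sum_{m\geqslant0}\dbinom{a}{m}z^m$, valid for any $a\in\qq$ in $\qq[[z]]$, the coefficient of $z^{n+r}$ in the summand indexed by $(k_1,\dots,k_j)$ is $\dbinom{\frac{c_1}{k}-1+d_{k_1}+\cdots+d_{k_j}}{n+r}$. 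Summing over all $j$ and all such index tuples yields exactly the asserted formula.

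I do not expect a genuine obstacle here: the argument is purely formal, mirroring the derivation of \Cref{Toddgenusci} from \eqref{Toddgenus=coeffixn+r}, with the only new ingredient being the extra factor $(1+z)^{c_1/k}$ that shifts the binomial upper argument by $c_1/k$; this factor is precisely what the characteristic power series $Q(x)=\frac{kxe^x}{e^{kx}-1}$ of the $A_k$-genus contributes. If anything needs care it is bookkeeping with the generalized binomial coefficients for non-integer argument, and making explicit that \eqref{Akgenus=coeffixn+r} is being used as the starting point; both are routine.
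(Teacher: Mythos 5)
Your proposal is correct and is essentially the paper's own (implicit) argument: the paper states the formula as an immediate consequence of \eqref{Akgenus=coeffixn+r}, obtained exactly by expanding $\prod_{i=1}^r\bigl((1+z)^{d_i}-1\bigr)$ over subsets and reading off the $z^{n+r}$-coefficient via the generalized binomial theorem. The signs and the shift by $\frac{c_1}{k}-1$ in your expansion match the stated formula, so there is nothing to add.
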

Note that, the $A_1$-genus is just the Todd genus.
Comparing \eqref{Toddgenus=coeffixn+r} and \eqref{Akgenus=coeffixn+r}, it is easy to get the following property.
\begin{proposition}
The $A_k$-genus of $X_n(\underline{d})$ and the Todd genus of $X_l(\underline{d})$ with $0\leqslant l\leqslant n$ has the following relation:
\[
A_k(X_n(\underline{d}))=k^n\left[\sum\limits_{l=0}^n\dbinom{\frac{c_1}{k}}{n-l}\cdot(-1)^l \Td(X_l(\underline{d}))\right].
\]
\end{proposition}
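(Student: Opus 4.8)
The plan is to deduce the identity directly from the two power-series descriptions \eqref{Toddgenus=coeffixn+r} and \eqref{Akgenus=coeffixn+r}, observing that the generating function for the $A_k$-genera differs from the one for the Todd genera only by the single factor $(1+z)^{c_1/k}$. The asserted formula is then exactly the Cauchy product expansion of that factor, with each resulting coefficient reinterpreted as a Todd genus of a lower-dimensional complete intersection of the same multi-degree.

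Concretely, I would first set
\[
g(z):=(1+z)^{-1}\prod_{i=1}^r\bigl((1+z)^{d_i}-1\bigr),
\]
which depends only on $\underline{d}$ and not on the dimension. Each factor $(1+z)^{d_i}-1$ vanishes at $z=0$ and hence is divisible by $z$, so $g(z)$ is divisible by $z^r$; in particular $[z^m]\,g(z)=0$ for all $m<r$. By \eqref{Toddgenus=coeffixn+r}, the Todd genus of $X_l(\underline{d})$ (same multi-degree, dimension $l$) is $\Td(X_l(\underline{d}))=(-1)^l\,[z^{l+r}]\,g(z)$, so that $[z^{l+r}]\,g(z)=(-1)^l\Td(X_l(\underline{d}))$ for every $l\geqslant 0$.

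Next I would expand $(1+z)^{c_1/k}=\sum_{j\geqslant 0}\binom{c_1/k}{j}z^j$ as a binomial series with generalized binomial coefficients, multiply by $g(z)$, and apply \eqref{Akgenus=coeffixn+r}:
\[
A_k(X_n(\underline{d}))=k^n\,[z^{n+r}]\Bigl((1+z)^{c_1/k}g(z)\Bigr)=k^n\sum_{j\geqslant 0}\binom{c_1/k}{j}\,[z^{\,n+r-j}]\,g(z).
\]
Since $[z^{n+r-j}]\,g(z)=0$ once $j>n$, the sum ranges over $0\leqslant j\leqslant n$; substituting $l=n-j$ and using the identity from the previous paragraph turns each surviving term into $\binom{c_1/k}{n-l}(-1)^l\Td(X_l(\underline{d}))$, which is precisely the claimed relation.

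There is no genuine analytic difficulty; the only point requiring care is the bookkeeping. One must keep in mind that in \eqref{Akgenus=coeffixn+r} the exponent $c_1/k$ is the one attached to the top dimension $n$ (that is, $c_1=n+r+1-\sum_i d_i$), whereas the factor $g(z)$ carrying the Todd data is the \emph{same} polynomial in every dimension, so that the genera on the right-hand side are honestly $\Td(X_l(\underline{d}))$ for the original multi-degree $\underline{d}$. It is also worth recording that the identity is purely formal: it needs neither $c_1/k\in\zz$ nor $c_1\equiv 0\pmod N$, only the interpretation of $\binom{c_1/k}{n-l}$ as a generalized binomial coefficient. Alternatively, the same conclusion can be reached from the closed formulas in \Cref{Toddgenusci} and \Cref{Akgenusformula} by expanding $\binom{\frac{c_1}{k}-1+d_{k_1}+\cdots+d_{k_j}}{n+r}$ with the Vandermonde-type identity $\binom{a+b}{n+r}=\sum_{l}\binom{a}{n-l}\binom{b}{l+r}$ and collecting terms, but the generating-function argument is shorter and cleaner.
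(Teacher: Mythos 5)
Your argument is correct and is essentially the paper's own proof: the paper derives the proposition precisely by comparing \eqref{Toddgenus=coeffixn+r} and \eqref{Akgenus=coeffixn+r}, i.e.\ by noting that the two generating functions differ only by the factor $(1+z)^{c_1/k}$ and expanding it as a binomial series, which is exactly your Cauchy-product computation. Your added bookkeeping (divisibility of $g(z)$ by $z^r$, the dimension-independence of $g$, and the fact that no integrality of $c_1/k$ is needed) just makes explicit what the paper leaves as ``easy to get.''
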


Let $M$ be a compact complex manifold of dimension $n$.
Since the power series corresponding to the $A_k$-genus satisfies that
\[
Q(x)=\frac{kx\cdot e^x}{e^{kx}-1}=\frac{kx}{1-e^{-kx}}e^{-(k-1)x}, k\geqslant 2,
\]
by \cite[Appendix III]{HiBeJu1992}, 
\[
A_k(M)=k^n\chi\left(M,K^{\frac{k-1}{k}}\right).
\]
Hence, we get the following result:
\begin{theorem}
For complete intersection $X_n(\underline{d})$ with $c_1=n+r+1-\sum\limits_{i=1}^rd_i$. If $k\geqslant 2$ and $c_1\equiv 0 \pmod k$,  the $A_k$-genus of $X_n(\underline{d})$ satisfies the following properties:

{\rm (1)} ~$
A_k(X_n(\underline{d}))=
\begin{cases}
0, & \text{if~} c_1>0;\\
k^n(1+(-1)^n), & \text{if~} c_1=0.
\end{cases}$

{\rm (2)} If $c_1<0$, then $
A_k(X_n(\underline{d}))\geqslant k^n\left[\dbinom{n+1-\frac{k-1}{k}c_1}{n+1}+(-1)^n\dbinom{n+1-\frac{1}{k}c_1}{n+1}\right].$
\end{theorem}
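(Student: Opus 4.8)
The plan is to obtain this theorem as an immediate specialization of \Cref{mainNgci}, using the identity $A_k(M)=k^n\chi\left(M,K^{\frac{k-1}{k}}\right)$ recorded just above (which follows from \cite[Appendix III]{HiBeJu1992}, since $\frac{kx\,e^x}{e^{kx}-1}=\frac{kx}{1-e^{-kx}}e^{-(k-1)x}$). Writing the exponent $\frac{k-1}{k}$ in the form $\frac{k'}{N}$ with $N=k$ and $k'=k-1$, the hypothesis $c_1\equiv 0\pmod k$ becomes exactly the hypothesis $c_1\equiv 0\pmod N$ under which \Cref{mainNgci} is proved; in particular $\chi\left(X_n(\underline{d}),K^{\frac{k-1}{k}}\right)$ is integral, so the equality $A_k(X_n(\underline{d}))=k^n\chi\left(X_n(\underline{d}),K^{\frac{k-1}{k}}\right)$ is one of integers.

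First I would record the elementary inequalities that make the relevant cases of \Cref{mainNgci} applicable: since $k\geqslant 2$ we have $0<k-1<k$, so the pair $(k',N)=(k-1,k)$ lies in the range $0<k'<N$; moreover $2(k-1)\geqslant k$, so $\tfrac12\leqslant\frac{k-1}{k}\leqslant 1$, which is precisely the subinterval appearing in \Cref{mainNgci}(3). When $k=2$ one sits on the boundary $\frac{k-1}{k}=\tfrac12$, which is still included in that interval.

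For (1): if $c_1>0$, then $0<k-1<k$ together with \Cref{mainNgci}(1) gives $\chi\left(X_n(\underline{d}),K^{\frac{k-1}{k}}\right)=0$, hence $A_k(X_n(\underline{d}))=0$; if $c_1=0$, then \Cref{mainNgci}(2) gives $\chi\left(X_n(\underline{d}),K^{\frac{k-1}{k}}\right)=1+(-1)^n$, hence $A_k(X_n(\underline{d}))=k^n\bigl(1+(-1)^n\bigr)$. For (2): if $c_1<0$, applying \Cref{mainNgci}(3) with $N=k$ and $k'=k-1$ (so that $\frac{N-k'}{N}=\frac1k$) yields
\[
\chi\left(X_n(\underline{d}),K^{\frac{k-1}{k}}\right)\geqslant\dbinom{n+1-\frac{k-1}{k}c_1}{n+1}+(-1)^n\dbinom{n+1-\frac{1}{k}c_1}{n+1},
\]
and multiplying through by $k^n>0$ gives the asserted lower bound.

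Because every step is a substitution into results already established, there is no genuine obstacle; the only place demanding a little care is the bookkeeping of which exponent plays the role of ``$k$'' and which plays ``$N$'' in \Cref{mainNgci}, together with the trivial verification that $k\geqslant 2$ forces $\frac{k-1}{k}\in[\tfrac12,1)$ so that the monotone bound of \Cref{mainNgci}(3) is available (for $k=2$ and $n$ odd this bound collapses to the exact value $0$, consistent with the corresponding vanishing statement). As an alternative one could prove the theorem directly from the explicit expression in \Cref{Akgenusformula} by repeating the generalized-binomial-coefficient estimates used for \Cref{mainNgci}, but routing through \Cref{mainNgci} is shorter and avoids duplicating that argument.
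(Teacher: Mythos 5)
Your proposal is correct and follows essentially the same route as the paper, which likewise obtains this theorem by combining the identity $A_k(M)=k^n\chi\left(M,K^{\frac{k-1}{k}}\right)$ with \Cref{mainNgci} applied to $N=k$ and exponent $\frac{k-1}{k}$. Your explicit checks that $0<k-1<k$ and $\tfrac12\leqslant\frac{k-1}{k}\leqslant 1$ (so that parts (1) and (3) of \Cref{mainNgci} apply) merely spell out what the paper leaves implicit.
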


\begin{remark}\label{rem-nomore}
The results in this paper yields no more information about the $S^1$-action on complete intersections viewed as complex manifolds.  For example, only under $c_1<0$, the automorphism group of holomorphic transformation on $X_n(\underline{d})$ is finite (\cite[Chapter III Theorem 2.1]{Kobayashi}), then there exists no non-trivial $S^1$-action preserving the complex structure on $X_n(\underline{d})$.
Moreover, as shown in \cite[Theorem 3.1]{Benoist2013}, for $X_n(\underline{d})$ with $n\geqslant 2$, the automorphism group is zero-dimensional except for the quadric $X_n(2)$ and the complex projective spaces. In other words only the homogeneous complete intersections admit a non-trivial circle action preserving the complex structure (\cite{DessaiWiemeler}). In contrast, there is relatively little known results about the smooth $S^1$-action on complete intersections. For more details please see \cite{DessaiWiemeler}.
\end{remark}

\section{Proof of \Cref{mainNgci}}\label{sec-proof}

\begin{proof}[Proof of \Cref{mainNgci}]
(1) If $c_1=n+r+1-\sum\limits_{i=1}^r d_i>0$ and $c_1\equiv 0\pmod N$, when $0< k<N$,  for any $0\leqslant j\leqslant r$ and $1\leqslant k_1\leqslant \cdots\leqslant k_j\leqslant r$, we have
\[
0\leqslant \frac{k}{N}c_1-1+d_{k_1}+\cdots+d_{k_j}<n+r.
\]
Hence by \eqref{EgNformula},  for $0< k<N$,
\[
\chi\left(X_n(\underline{d}),K^\frac{k}{N}\right)=\sum\limits_{j=0}^r(-1)^{n+r+j}\sum\limits_{1\leqslant k_1<\dots<k_j\leqslant r}\dbinom{\frac{k}{N}c_1-1+d_{k_1}+\cdots+d_{k_j}}{n+r}=0.
\]

When $k=0$, the only nonzero generalized binomial coefficient in \eqref{EgNformula} is $\dbinom{-1}{n+r}=(-1)^{n+r}$, which holds corresponding to $j=0$. So $\chi\left(X_n(\underline{d})\right)=1$. 

When $k=N$,
the result $\chi\left(X_n(\underline{d}), K\right)=(-1)^n$ follows directly from \Cref{prop-twopEgLN}. 

(2) If $c_1=n+r+1-\sum\limits_{i=1}^r d_i=0$, then by \Cref{Toddgenusci},
\[
\chi\left(X_n(d_1,\dots,d_r),K^\frac{k}{N}\right)=\Td(X_n(d_1,\dots,d_r))=1+(-1)^n.
\]

(3) In the case of $c_1=n+r+1-\sum\limits_{i=1}^r d_i<0$, we mainly use the induction on the number of degrees to prove 
\[
\chi\left(X_n(d_1,\dots,d_r),K^\frac{k}{N}\right)\geqslant \dbinom{n+1-\frac{k}{N}c_1}{n+1}+(-1)^n\dbinom{n+1-\frac{N-k}{N}c_1}{n+1}.
\]

{\bf Case (3)-1:} $r=1$. For the hypersurface $X_n(d_1)$ with $c_1=n+2-d_1<0$ and $\dfrac{k}{N}c_1$ being integral, note that 
\[
(-1)^k\dbinom{a}{k}=\dbinom{k-1-a}{k}, 
\]
then by \eqref{EgNformula},
\begin{align*}
\chi\left(X_n(d_1),K^\frac{k}{N}\right)& =(-1)^{n+1}\left[\dbinom{\frac{k}{N}c_1-1}{n+1}-\dbinom{\frac{k}{N}c_1-1+d_1}{n+1}\right] \\
& =(-1)^{n+1}\left[(-1)^{n+1}\dbinom{n+1-\frac{k}{N}c_1}{n+1}-\dbinom{\frac{k}{N}c_1-1+d_1}{n+1}\right] \\
& =\dbinom{n+1-\frac{k}{N}c_1}{n+1}+(-1)^n\dbinom{\frac{k}{N}c_1-1+d_1}{n+1} \\
& =\dbinom{n+1-\frac{k}{N}c_1}{n+1}+(-1)^n\dbinom{n+1-\frac{N-k}{N}c_1}{n+1}. 
\end{align*}

{\bf Case (3)-2:} $r=2$. For $X_n(d_1,d_2)$, the intersection of two hypersurfaces, if $c_1=n+3-d_1-d_2<0$, then by \eqref{EgNformula},
\begin{align}
\chi\left(X_n(d_1,d_2),K^\frac{k}{N}\right) & =(-1)^{n+2}\left[\dbinom{\frac{k}{N}c_1-1}{n+2}-\dbinom{\frac{k}{N}c_1-1+d_1}{n+2}\right. \nonumber\\
&~~ \left.-\dbinom{\frac{k}{N}c_1-1+d_2}{n+2}+\dbinom{\frac{k}{N}c_1-1+d_1+d_2}{n+2}\right] \nonumber\\
& =\dbinom{(1-\frac{k}{N})c_1-1+d_1+d_2}{n+2}-\dbinom{(1-\frac{k}{N})c_1-1+d_2}{n+2} \nonumber\\
&~~ -\dbinom{(1-\frac{k}{N})c_1-1+d_1}{n+2}+\dbinom{(1-\frac{k}{N})c_1-1}{n+2}. \label{Akg2degree}
\end{align}

\noindent {\bf Claim:}
For any integers $a,b,n$, let $n>0$, $a\geqslant b$ and $a+b\geqslant n-1$, then
\begin{equation}\label{4dbinom}
\dbinom{a+1}{n}+\dbinom{b-1}{n}\geqslant \dbinom{a}{n}+\dbinom{b}{n}.
\end{equation}
The equality holds if $a+b=n-1$ and $n$ is odd. 
\begin{proof}[Proof of \eqref{4dbinom}]
\[
\dbinom{a+1}{n}+\dbinom{b-1}{n}-\dbinom{a}{n}-\dbinom{b}{n}=\dbinom{a}{n-1}-\dbinom{b-1}{n-1}.
\]

If $b>0$, $\dbinom{a}{n-1}-\dbinom{b-1}{n-1}\geqslant 0$. In addition, if $a+b=n-1$ and $n$ is odd,  $\dbinom{a}{n-1}-\dbinom{b-1}{n-1}= 0$

If $b\leqslant 0$, then $a\geqslant n-1$ and
\begin{align*}
\dbinom{a+1}{n}+\dbinom{b-1}{n}-\dbinom{a}{n}-\dbinom{b}{n}& =\dbinom{a}{n-1}-(-1)^{n-1}\dbinom{n-1-b}{n-1}\\
& \geqslant \dbinom{a}{n-1}-\dbinom{n-1-b}{n-1}\\
& \geqslant 0. 
\end{align*}
In addition, if $a+b=n-1$ and $n$ is odd, we have 
\[
\dbinom{a}{n-1}-(-1)^{n-1}\dbinom{n-1-b}{n-1}=\dbinom{a}{n-1}-\dbinom{n-1-b}{n-1}=0. \qedhere
\]
\end{proof}
Without loss of generality, let's assume  $d_1\geqslant d_2\geqslant2$. Note that in the right side of \eqref{Akg2degree}, when $\frac{k}{N}\geqslant\frac{1}{2}$,  
\[
\left(1-\frac{k}{N}\right)c_1-1+d_1+\left(1-\frac{k}{N}\right)c_1-1+d_2=\left(1-\frac{2k}{N}\right)c_1+n+1\geqslant n+1.
\]
Using \eqref{Akg2degree} and \eqref{4dbinom},  it implies that
\[
\chi\left(X_n(d_1,d_2),K^\frac{k}{N}\right)\geqslant \chi\left(X_n(d_1+1,d_2-1),K^\frac{k}{N}\right).
\]
Thus, for any positive integer $e<d_2$, we have
\[
\chi\left(X_n(d_1,d_2),K^\frac{k}{N}\right)\geqslant \chi\left(X_n(d_1+e,d_2-e),K^\frac{k}{N}\right).
\]
Hence,
\begin{align*}
\chi\left(X_n(d_1,d_2),K^\frac{k}{N}\right) & \geqslant \chi\left(X_n(d_1+d_2-1,1),K^\frac{k}{N}\right)\\
& = \dbinom{n+1-\frac{k}{N}c_1}{n+1}+(-1)^n\dbinom{n+1-\frac{N-k}{N}c_1}{n+1},
\end{align*}
where $c_1=n+3-d_1-d_2$.

{\bf Case (3)-3:} For $r>2$, without loss of generality, let's assume that $d_1\geqslant d_2\geqslant\cdots\geqslant d_r>1$.

For any $p, q, e$ satisfying $1\leqslant p<q\leqslant l<r$,  $0<e<d_q$,
suppose that 
\begin{align}
& \chi\left(X_n(d_1,\dots,d_p,\dots,d_q,\dots,d_l),K^\frac{k}{N}\right)\geqslant \chi\left(X_n(d_1,\dots,d_p+e,\dots,d_q-e,\dots,d_l),K^\frac{k}{N}\right),\label{inducAkdegreel} \\
& \chi\left(X_n(d_1,\dots,d_l),K^\frac{k}{N}\right)\geqslant \dbinom{n+1-\frac{k}{N}c_1}{n+1}+(-1)^n\dbinom{n+1-\frac{N-k}{N}c_1}{n+1}, \label{inducAkdegreel-0}
\end{align}
where $c_1=n+l+1-\sum\limits_{i=1}^ld_i$.

It is clear that \eqref{inducAkdegreel} and \eqref{inducAkdegreel-0} hold for $l=2$. We aim to show that \eqref{inducAkdegreel-0} holds for $l=r$.
Note that for the genus of $\chi\left(X_n(d_1,\dots,d_r),K^\frac{k}{N}\right)$, the $f(x) =e^{\frac{k}{N}x}\cdot(1-e^{-x})$, which occurs in the corresponding characteristic power series $Q(x)=\dfrac{x}{f(x)}$, satisfies the identity
\begin{align}\label{AkR(u)}
& f(u_1)f(u_2)f(u_3)-f(u_1+w)f(u_2-w)f(u_3)\nonumber \\
= ~& f(u_1)f(w)f(u_2-w+u_3)-f(w)f(u_2-w)f(u_1+u_3).
\end{align}
By the virtual genus discussed in \cite[page 36]{HiBeJu1992} or \cite[\S9, \S11]{Hi1978},
\eqref{AkR(u)} implies that
\begin{align*}
& \chi\left(X_n(d_1,d_2,d_3,d_4,\dots,d_r),K^\frac{k}{N}\right)-\chi\left(X_n(d_1+e,d_2-e,d_3,d_4,\dots,d_r),K^\frac{k}{N}\right) \\
=~& \chi\left(X_n(d_1,e,d_2-e+d_3,d_4,\dots,d_r),K^\frac{k}{N}\right)-\chi\left(X_n(e,d_2-e,d_1+d_3,d_4,\dots,d_r),K^\frac{k}{N}\right).
\end{align*}
In particular, for $e=1$,
\begin{align}
& \chi\left(X_n(d_1,d_2,d_3,d_4,\dots,d_r),K^\frac{k}{N}\right)-\chi\left(X_n(d_1+1,d_2-1,d_3,d_4,\dots,d_r),K^\frac{k}{N}\right) \nonumber \\
=~& \chi\left(X_n(d_1,d_2-1+d_3,d_4,\dots,d_r),K^\frac{k}{N}\right)-\chi\left(X_n(d_2-1,d_1+d_3,d_4,\dots,d_r),K^\frac{k}{N}\right).  \label{Akdiff4}
\end{align}
By the induction hypothesis \eqref{inducAkdegreel}, the right side of \eqref{Akdiff4} is non-negative. Hence,
\begin{align*}
\chi\left(X_n(d_1,d_2,d_3,d_4,\dots,d_r),K^\frac{k}{N}\right) & \geqslant
\chi\left(X_n(d_1+1,d_2-1,d_3,d_4,\dots,d_r),K^\frac{k}{N}\right) \\
& \geqslant ~\cdots\\
& \geqslant \chi\left(X_n(d_1+d_2-1,1,d_3,d_4,\dots,d_r), K^\frac{k}{N}\right)\\
& \geqslant \dbinom{n+1-\frac{k}{N}c_1}{n+1}+(-1)^n\dbinom{n+1-\frac{N-k}{N}c_1}{n+1}.
\end{align*}
We have completed the induction, and the \Cref{mainNgci} holds.
\end{proof}

\end{document}